\newcommand{\Q}{\mathbb{Q}}
\newcommand{\Z}{\mathbb{Z}}
\newcommand{\K}{\mathbb{K}}
\newcommand{\R}{\mathbb{R}}
\newcommand{\C}{\mathbb{C}}
\newcommand{\N}{\mathbb{N}}
\renewcommand{\S}{\mathbb{S}}
\renewcommand{\P}{\mathbb{P}}
\renewcommand{\a}{\mathfrak{a}}
\newcommand{\p}{\mathfrak{p}}
\newcommand{\one}{\mathbbm{1}}
\newtheorem{theorem}{Theorem}[section]
\newtheorem{definition}[theorem]{Definition}
\newtheorem{proposition}[theorem]{Proposition}
\newtheorem{corollary}[theorem]{Corollary}
\newtheorem{question}{Question}
\newtheorem{lemma}[theorem]{Lemma}
\theoremstyle{definition}
\newtheorem{remark}[theorem]{Remark}
\crefname{conjecture}{conjecture}{conjectures}
\Crefname{conjecture}{Conjecture}{Conjectures}
\crefname{corollary}{corollary}{Corollary}
\Crefname{corollary}{Corollary}{Corollaries}
\title{Pointwise Ergodic Averages Along the Omega Function in Number Fields}
\author{Diego C\'espedes}
 \address[Diego C\'espedes]{Departamento de Ingenier\'{\i}a Matem\'atica and Centro de Modelamiento Matem{\'a}tico, Universidad de Chile \& IRL 2807 - CNRS, Beauchef 851, Santiago, Chile} \email{diegocespedes@ug.uchile.cl} 
\author{Sebasti\'an Donoso} \address[Sebasti{\'a}n Donoso]{Departamento de Ingenier\'{\i}a Matem\'atica and Centro de Modelamiento Matem{\'a}tico, Universidad de Chile \& IRL 2807 - CNRS, Beauchef 851, Santiago, Chile} \email{sdonosof@uchile.cl}
\thanks{Both authors were partially funded by Centro de Modelamiento Matemático (CMM) FB210005, BASAL funds for centers of excellence from ANID-Chile and by ANID/Fondecyt/1241346}
\subjclass[2020]{Primary: 37A30; Secondary: 	11N37.}
\begin{document}
\begin{abstract}
We study a general criterion for guaranteeing that an ergodic average exhibits the strong sweeping-out property. This result implies, in particular, the failure of pointwise convergence of these averages. Our result applies to averages along the Omega function in number fields, generalizing a result of Loyd. We also show that the averages
\[\frac{1}{N^2}\sum_{1\leq m,n \leq N}f(T^{\Omega(m^2+n^2)}x)\] exhibit the strong sweeping-out property, which answers affirmatively a question posed by Le, Moreira, Sun, and the second author.

On the other hand, using number-theoretic methods, we establish the pointwise convergence of averages along the $\Omega$ function defined on the ideals of a number field in uniquely ergodic systems. Using this dynamical framework, we also derive several natural number-theoretic consequences of independent interest.
\end{abstract}
\maketitle

\section{Introduction}\label{section:introduction}

It is well-known that the prime number theorem can be rephrased in terms of averages of the Liouville function. More precisely, the prime number theorem is equivalent to 
\begin{equation} \label{eq:vanishing_omega_pnt} 
\frac{1}{N} \sum_{k=1}^{N} \lambda(k)=o(1)_{N\to\infty}.
\end{equation}
Here, the Liouville function $\lambda(n)\colon \N\to \{-1,1\}$ is the completely multiplicative function (that is, $\lambda(n\cdot m)=\lambda(n) \cdot \lambda(m)$ for $m,n\in \N$) defined by $\lambda(p)=-1$ for every prime $p$. 
Rewriting $\lambda$ as $\lambda(n)=(-1)^{\Omega(n)}$, where $\Omega(n)$ counts the prime factors of $n$ (counting repeated factors with multiplicity), \eqref{eq:vanishing_omega_pnt} is equivalent to 
\[\frac{1}{N}\sum_{k=1}^{N}(-1)^{\Omega(k)}=o(1)_{N\to \infty},\]
which suggests an ergodic-theoretic approach to the problem. This is indeed what Bergelson and Richter did in their breakthrough work \cite{Bergelson_Richter_dynamical_pnt:2022}, where they proposed and proved the following statement involving dynamical systems. 
\begin{theorem}[{\cite{Bergelson_Richter_dynamical_pnt:2022}}]
Let $(X, T)$ be a uniquely ergodic system and let $\mu$ be its unique invariant measure. Then, for any $f\in C(X)$ and any $x \in X$ 
\[ \lim_{N\to \infty} \frac{1}{N} \sum_{n=1}^{N}f(T^{\Omega(n)}x)=\int_{X}fd\mu.\]
\label{thm:bergelson_richter_dynamical_theorem}
\end{theorem}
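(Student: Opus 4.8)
The plan is to rewrite the ergodic average as a weighted average of $f$ along a single orbit of $x$, and then to exploit the fact that, as $N\to\infty$, these weights spread out over a window of diverging length, on which unique ergodicity forces equidistribution. First I would group the summands according to the value of $\Omega$. Setting
\[ w_k(N) := \frac1N\,\#\{1\le n\le N : \Omega(n)=k\}, \qquad k\ge 0, \]
which defines a probability vector (so $\sum_{k\ge0}w_k(N)=1$), the average in question becomes the single orbit sum
\[ \frac1N\sum_{n=1}^N f(T^{\Omega(n)}x) = \sum_{k\ge 0} w_k(N)\, f(T^k x). \]
The only dynamical input I will use is the standard characterization of unique ergodicity that Birkhoff averages converge uniformly in the base point: for every $\varepsilon>0$ there is $H_0=H_0(\varepsilon)$ with
\[ \Big| \frac1H\sum_{j=0}^{H-1} f\big(T^{a+j}x\big) - \int_X f\,d\mu \Big| < \varepsilon \qquad\text{for all } a\ge 0,\ H\ge H_0, \]
since $T^a x$ may be regarded as an arbitrary base point.

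The number-theoretic heart is the distribution of $\Omega(n)$. By the Erd\H{o}s--Kac theorem, refined by Sathe--Selberg type local asymptotics, $\Omega(n)$ has mean and variance asymptotic to $\log\log N$; hence the weights $w_k(N)$ concentrate on the bulk window $|k-\log\log N|\le \psi(N)\sqrt{\log\log N}$ for any $\psi(N)\to\infty$, while across any fixed number of consecutive values inside this window they vary slowly, in the sense that $w_{k+1}(N)/w_k(N)\to 1$ uniformly on the bulk. The essential qualitative point is that the spread $\sqrt{\log\log N}$ of the law of $\Omega$ tends to infinity, so the weights live on an ever-longer range of orbit times.

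Combining the two ingredients should finish the proof. Fix $\varepsilon>0$ and the associated $H_0$. Since $f$ is bounded and, by concentration, the total mass outside the bulk tends to $0$, the tail contributes $o(1)$. I then partition the bulk into consecutive blocks $B$ of length $H_0$; on each block the weights are constant up to a factor $1+o(1)$ by the slow-variation estimate, so the uniform Birkhoff bound applied to the window $B$ gives
\[ \sum_{k\in B} w_k(N)\, f(T^k x) = \Big(\sum_{k\in B} w_k(N)\Big)\Big(\int_X f\,d\mu + O(\varepsilon)\Big) + o(1)\sum_{k\in B} w_k(N). \]
Summing over the blocks and using $\sum_k w_k(N)=1$ yields $\big|\sum_k w_k(N)\, f(T^k x)-\int_X f\,d\mu\big|\le C\varepsilon+o(1)$; letting $N\to\infty$ and then $\varepsilon\to0$ establishes the claim for every $f\in C(X)$ and every $x\in X$.

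The step I expect to be the main obstacle is the uniform local control of the weights $w_k(N)$: namely that they are concentrated and slowly varying across bounded windows throughout the diverging bulk, uniformly in $k$. This is precisely where analytic number theory is indispensable, through Sathe--Selberg local asymptotics for $\#\{n\le N:\Omega(n)=k\}$. An additional wrinkle specific to $\Omega$, as opposed to the distinct-prime-factor function $\omega$, is that $\Omega(n)$ can be as large as $\log_2 n$ because of high prime powers; one must therefore check separately that such $n$ are sufficiently sparse to be absorbed into the negligible tail, so that the local estimates on the bulk are genuinely all that is needed.
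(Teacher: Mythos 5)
Your proposal is correct in substance, and it should be compared with the paper's proof of \Cref{thm:dynamical_omega_number_fields} (via \Cref{thm:teo_arimetico_fields}), since the paper does not reprove \Cref{thm:bergelson_richter_dynamical_theorem} directly but recovers it as the case $\K=\Q$, $c=\Omega_{\Q}$. Your number-theoretic core is exactly the paper's: concentration of the weights $w_k(N)$ on a window of width $O(\sqrt{\log\log N})$ around $\log\log N$ (Erd\H{o}s--Kac/Hardy--Ramanujan, \Cref{thm:hardy_ramanujan}) plus Sath\'e--Selberg local asymptotics giving uniform control of consecutive weights (\Cref{lemma:approximation_level_sets}). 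Where you genuinely diverge is the dynamical half. The paper never invokes uniform convergence of Birkhoff averages: it proves the purely arithmetic shift-invariance statement $\sum_k |w_N(k+1)-w_N(k)|\to 0$, so that averages of $a(\Omega(n))$ and $a(\Omega(n)+1)$ agree asymptotically for any bounded $a$; it then forms the empirical measures $\mu_N=\sum_k w_N(k)\,\delta_{T^kx}$ and concludes by weak-$*$ compactness that every limit point is $T$-invariant, hence equal to $\mu$ by unique ergodicity. You instead use the equivalent characterization of unique ergodicity as uniform-in-basepoint convergence of ergodic averages, freeze the weights on blocks of fixed length $H_0(\varepsilon)$ via the ratio bound $w_{k+1}(N)/w_k(N)\to 1$, and sum blockwise. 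The two routes consume the same analytic input (your multiplicative ratio control and the paper's $\ell^1$ difference bound come from the identical Gaussian computation); the paper's formulation is more modular, since the arithmetic statement stands alone and is reused for norm convergence and the equidistribution corollaries, while yours is more direct and quantitative and makes transparent exactly where unique ergodicity enters.

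One detail to tighten: slow variation cannot hold uniformly on the bulk for an \emph{arbitrary} $\psi(N)\to\infty$. From the Gaussian form, $w_{k+1}(N)/w_k(N)=\exp\bigl(-\tfrac{k-\log\log N}{\log\log N}+O(\tfrac{1}{\log\log N})\bigr)$, which tends to $1$ uniformly on $|k-\log\log N|\le \psi(N)\sqrt{\log\log N}$ only if $\psi(N)=o(\sqrt{\log\log N})$. So either let $\psi$ grow slowly, or (as the paper does with $I_{N,C}$) fix a large $C=C(\varepsilon)$, accept a tail of mass at most $\varepsilon$, and send $\varepsilon\to 0$ after $N\to\infty$; your existing $O(\varepsilon)$ bookkeeping absorbs this without structural change, as does the $o(1)$ contribution of the one incomplete block of length $<H_0$.
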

Using particular classes or examples of uniquely ergodic systems, such as finite rotations, irrational rotations, certain symbolic systems, and nilsystems, allows us to obtain the prime number theorem and far-reaching number-theoretic generalizations of it (see \cite{Bergelson_Richter_dynamical_pnt:2022} for further discussions). 

It is natural to ask about norm and/or pointwise convergence of averages as in \cref{thm:bergelson_richter_dynamical_theorem} under no unique ergodicity of the system or a non-continuous function $f$. Loyd answered these questions in \cite{Loyd_dynamical_omega:2023}, where she proved that the convergence in norm still holds, but the pointwise convergence fails for an extensive class of dynamical systems.
\begin{theorem}[{\cite{Loyd_dynamical_omega:2023}}]\label{thm:Loyd}
Let $(X,\mathcal{X},T,\mu)$ be an ergodic invertible m.p.s. where the measure $\mu$ is non-atomic ($\mu(\{x\})=0$ for all $x\in X$). Then, for every $\varepsilon>0$, there exists a measurable set $A$ such that $\mu(A)<\varepsilon$ and for $\mu$-almost every $x\in X$, we have 
\[\limsup_{N\to \infty}\frac{1}{N} \sum_{n=1}^{N}\one_{A}(T^{\Omega(n)}x) =1 \quad \text{ and } \quad \liminf_{N\to \infty}\frac{1}{N} \sum_{n=1}^{N}\one_{A}(T^{\Omega(n)}x) =0.\]
\end{theorem}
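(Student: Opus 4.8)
The starting point is to regroup the sum by the value of $\Omega$. Writing $\pi_k(N)=\#\{n\le N:\Omega(n)=k\}$ and $\nu_N(k)=\pi_k(N)/N$, we have the exact identity
\[\frac1N\sum_{n=1}^N\one_A(T^{\Omega(n)}x)=\sum_{k\ge 0}\nu_N(k)\,\one_A(T^kx),\]
so the ergodic average is literally a probability-weighted average of the orbit indicators $\one_A(T^kx)$. The Erd\H{o}s--Kac theorem for $\Omega$ tells us that $\nu_N$ concentrates around $\log\log N$ with spread $\sqrt{\log\log N}$: for every fixed $t$,
\[\nu_N\big(\{k:|k-\log\log N|\le t\sqrt{\log\log N}\}\big)\xrightarrow[N\to\infty]{}\Phi(t)-\Phi(-t),\]
which tends to $1$ as $t\to\infty$ ($\Phi$ the standard normal CDF). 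Thus, if for some $x$ the indicator $k\mapsto\one_A(T^kx)$ is identically $1$ on an interval of radius $\gg\sqrt{c}$ about an integer $c$, then choosing $N$ with $\log\log N\approx c$ makes the average $\ge 1-o(1)$; if it is identically $0$ on such an interval, the average is $\le o(1)$. The problem is therefore reduced to a dynamical construction: produce a single set $A$ with $\mu(A)<\varepsilon$ so that, for a.e.\ $x$, the orbit $\{T^kx\}_{k\ge0}$ contains arbitrarily long runs of consecutive times lying entirely in $A$ (with run length growing faster than the square root of the run's location), and likewise arbitrarily long runs lying entirely outside $A$.

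Since $(X,T)$ is ergodic with $\mu$ non-atomic it is aperiodic, so Rokhlin's lemma applies, and I would fix a refining sequence of Rokhlin towers: at stage $j$ a tower with base $B_j$ and height $H_j\uparrow\infty$ whose levels $B_j,TB_j,\dots,T^{H_j-1}B_j$ are disjoint and cover $X$ up to a set of measure $\delta_j$, with $\sum_j\delta_j<\infty$. Inside the $j$-th tower I would designate a block of $\ell_j$ consecutive levels around height $b_j\approx H_j/2$ and place exactly those levels into $A$. Choosing $\ell_j\approx t_j\sqrt{H_j}$ with $t_j\to\infty$ slowly, the measure added at stage $j$ is $\approx\ell_j/H_j\approx t_j/\sqrt{H_j}$; taking $H_j$ to grow fast enough makes $\sum_j t_j/\sqrt{H_j}<\varepsilon$, so $\mu(A)<\varepsilon$. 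The design is such that any $x$ lying in the lower half of the $j$-th tower, say at level $i<b_j$, has $T^kx$ climbing through the designated block exactly for $k$ in the window $[\,b_j-i,\;b_j-i+\ell_j\,)$; this is a run of length $\ell_j$ of consecutive times with $T^kx\in A$, centered at $k\approx b_j-i\le H_j$ and of length $\ell_j\approx t_j\sqrt{H_j}\ge t_j\sqrt{b_j-i}$, i.e.\ much longer than the square root of its location. Between consecutive blocks the orbit spends $\approx H_j-\ell_j$ consecutive times outside $A$, which furnishes the required long runs outside $A$ as well.

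To upgrade these runs to a statement about a.e.\ $x$, I would use $\sum_j\delta_j<\infty$ and Borel--Cantelli to conclude that a.e.\ $x$ lies in all but finitely many of the towers, and (using the refining structure together with a measure bound on the lower half of each tower) that a.e.\ $x$ lies in the lower half of infinitely many towers. For such an $x$ one obtains, for infinitely many $j$, a run in $A$ of length $\ell_j$ centered at $c_j:=b_j-i_j\to\infty$; selecting $N_j$ with $\log\log N_j\approx c_j$ and invoking the concentration above with $t=t_j/2\to\infty$ gives
\[\frac1{N_j}\sum_{n\le N_j}\one_A(T^{\Omega(n)}x)\ge\Phi(t_j/2)-\Phi(-t_j/2)\longrightarrow 1,\]
whence $\limsup_N=1$, since the averages never exceed $1$. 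The same scheme applied to the long inter-block runs outside $A$, along a different sequence $N'_j$ with $\log\log N'_j$ falling inside a gap, yields $\liminf_N=0$.

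The main obstacle is the tension created by the slow growth of $\log\log N$ together with the growing window width $\sqrt{\log\log N}$: because $\nu_N$ spreads out, forcing the average near $1$ requires runs inside $A$ whose length grows like the square root of their height, and this must be achieved while keeping $\mu(A)<\varepsilon$. Balancing $\ell_j\sim t_j\sqrt{H_j}$ against the measure budget $\sum_j\ell_j/H_j<\varepsilon$ is precisely what makes these two demands compatible, and verifying that a single $A$ simultaneously produces, for a.e.\ $x$, the extreme values $\limsup_N=1$ and $\liminf_N=0$---rather than mere oscillation bounded away from the endpoints---is the delicate part that the refining-tower bookkeeping and the quantitative Erd\H{o}s--Kac concentration are designed to handle.
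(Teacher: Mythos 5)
Your opening reduction is sound and matches the paper's: writing the average as $\sum_{k}\nu_N(k)\,\one_A(T^kx)$ and using the concentration of $\nu_N$ on windows of width $\asymp\sqrt{\log_2 N}$ around $\log_2 N$ is exactly how the comparison with the Gaussian operators $T_{N,C}$ is set up (the paper uses the sharper Sath\'e--Selberg asymptotics of \Cref{lemma:approximation_level_sets} plus \Cref{thm:hardy_ramanujan}, but your one-sided Erd\H{o}s--Kac bounds would suffice for that step). The genuine gap is in your construction of $A$, specifically the $\liminf$ half. Your set is $A=\bigcup_j A_j$ with $A_j$ a block of $\ell_j$ consecutive levels of the $j$-th tower, and you assert that ``between consecutive blocks the orbit spends $\approx H_j-\ell_j$ consecutive times outside $A$.'' That is true of $A_j$, but not of $A$: during a stage-$j$ gap the orbit keeps visiting the earlier blocks $A_{j'}$, $j'<j$. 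This interference is fatal as the construction stands, because the windows needed for the $\liminf$ have width $t_j\sqrt{c_j}\to\infty$, hence eventually dwarf the fixed scale of $A_1$. Concretely, take $(X,T)$ an irrational rotation: any hands-on Rokhlin tower there has levels that are finite unions of intervals, so $A_1$ has nonempty interior, and by minimality there is a finite $M_1$ such that \emph{every} orbit visits $A_1$ at least once in every $M_1$ consecutive times. Since the Gaussian weights vary slowly across the window, every window of width $\gg M_1$ then carries $\nu_N$-weighted $A$-density at least $c/M_1>0$, so $\liminf_N \sum_k\nu_N(k)\one_A(T^kx)\ge c/M_1>0$ for a.e.\ $x$, and the conclusion $\liminf=0$ is unattainable for this $A$. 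Repairing this forces a genuinely different design (e.g.\ blocks concentrated over very thin bases so that $A$-visits cluster in rare long runs, coordinated across all stages), and that coordination is precisely the hard core of the theorem, which your sketch defers to ``bookkeeping.''

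There is a second, smaller gap: ``a.e.\ $x$ lies in the lower half of infinitely many towers'' does not follow from Borel--Cantelli, since the towers are not independent; Fatou only gives a set of measure $\ge\tfrac12$ where this happens. The standard fix is to observe that the functions $x\mapsto\limsup_N$ and $x\mapsto\liminf_N$ of the averages are $T$-invariant --- which itself requires the asymptotic shift-invariance of the weights $\nu_N$, i.e.\ an input of the type of \Cref{thm:teo_arimetico_fields} --- and then invoke ergodicity; your proposal does not mention this. By contrast, the paper (following Loyd) never builds $A$ by hand: it quotes the strong sweeping-out of the operators $T_{N,C}$ (\Cref{lem:epsilon_sweeping}, resting on the Rosenblatt--Wierdl machinery) in the form of a \emph{dense $G_\delta$} set of classes $[A]$ in the measure algebra (\Cref{prop:completeness}), transfers it to the $\Omega$-averages by the two-sided comparison (\Cref{teo:failure_pointwise}), and intersects over $\varepsilon=1/m$ by Baire's theorem. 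The dense-$G_\delta$ formulation is exactly what handles the infinitely many mutually interfering requirements that defeat your explicit multi-scale tower construction: each approximate sweeping-out condition is open and dense, and Baire does the bookkeeping.
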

This type of behavior is commonly referred to in the literature as the \textit{strong sweeping-out property}. See \Cref{subsection:strong_sweeping} for the definition.

It is worth noting that recently, Loyd and Mondal \cite{loyd_mondal_omega:2025} studied the behavior of ergodic averages along the Omega function using other weights. For instance, they showed that by choosing \emph{logarithmic weights}, the averages also exhibited strong sweeping-out behavior.

Interestingly, when considering {\emph{double-logarithmic averages}}, the averages do not exhibit sweeping-out behavior, but they converge pointwise.
\subsection{Extensions to other number fields}
Recently, Donoso, Le, Moreira, and Sun proved in \cite{Donoso_Le_Moreira_Sun_Averages_multiplicative_Gaussians:2024} an analogous result to that of Bergelson and Richter, in the context of Gaussian integers. 
As a consequence of a stronger ``dynamical theorem'' \cite[Theorem E]{Donoso_Le_Moreira_Sun_Averages_multiplicative_Gaussians:2024}, they obtained the convergence along the $\Omega(\cdot)$ function of the norm of Gaussian integers.
\begin{theorem}\cite[Theorem D]{Donoso_Le_Moreira_Sun_Averages_multiplicative_Gaussians:2024}\label{thm:ThmD}
Let $(X, T)$ be a uniquely ergodic system with the unique invariant measure $\mu$. Then for any $x \in X$ and any $f \in C(X)$,
\[
\lim_{N \to \infty} \frac{1}{N^2} \sum_{1\leq m,n\leq N} f\left(T^{\Omega(m^2 + n^2)}x\right) = \int_X f \, d\mu.
\]
\end{theorem}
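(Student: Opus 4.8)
The plan is to run the Bergelson--Richter strategy behind \cref{thm:bergelson_richter_dynamical_theorem} in two dimensions: instead of evaluating the averages directly, I would study the empirical measures
\[
\nu_N:=\frac{1}{N^2}\sum_{1\le m,n\le N}\delta_{T^{\Omega(m^2+n^2)}x}
\]
and show that each of their weak-$*$ limit points equals $\mu$. Since the phase space $X$ of a uniquely ergodic system is compact, $\{\nu_N\}$ is precompact in the weak-$*$ topology and its limit points are probability measures; it therefore suffices to prove that an arbitrary weak-$*$ limit $\nu$ is $T$-invariant, as unique ergodicity then forces $\nu=\mu$. Because $\int_X f\,d\nu_N$ is exactly the average in the statement, the convergence $\nu_N\to\mu$ yields the theorem. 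We may assume $f$ is real-valued.

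The arithmetic input that should drive the $T$-invariance is the interpretation of $m^2+n^2=|m+ni|^2$ as the norm of a Gaussian integer: multiplication by a Gaussian prime lying above a rational prime $p\equiv 1\pmod 4$ increments the value of $\Omega$ applied to the norm by exactly one. Concretely, the integral linear map $S(m,n)=(2m-n,\,m+2n)$ --- multiplication by $2+i$ on $\C\cong\R^2$ --- satisfies $(2m-n)^2+(m+2n)^2=5(m^2+n^2)$, so that
\[
\Omega\big((2m-n)^2+(m+2n)^2\big)=\Omega\big(5(m^2+n^2)\big)=\Omega(m^2+n^2)+1
\]
for every $(m,n)$, because $5$ is prime. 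Hence $f\big(T^{\Omega((2m-n)^2+(m+2n)^2)}x\big)=(f\circ T)\big(T^{\Omega(m^2+n^2)}x\big)$, which rewrites $\int_X f\circ T\,d\nu$ as a limit of averages of $f(T^{\Omega(\cdot)}x)$ taken over the index-$5$ sublattice $(2+i)\Z[i]$ sitting inside the tilted square $S([1,N]^2)$. This is the mechanism linking a unit shift in $\Omega$ to the action of $T$.

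What remains --- and what I expect to be the main obstacle --- is to upgrade this mechanism into genuine $T$-invariance, i.e.\ to show that the average of $f(T^{\Omega(m^2+n^2)}x)$ over the sublattice $(2+i)\Z[i]$ in $S([1,N]^2)$ has the same limit as the average over the full lattice in $[1,N]^2$, once the deterministic shift in $\Omega$ forced by divisibility by $2+i$ is accounted for. This is precisely the assertion that the distribution of $\Omega$ along the values of the norm form $m^2+n^2$ is asymptotically insensitive, modulo a unit shift, to restricting $(m,n)$ to a fixed residue class --- a genuinely nontrivial point, since conditioning on membership in the sublattice biases the factorization type and not merely the lattice-point count. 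Establishing it requires the number-theoretic heart of the argument: a Sathe--Selberg/Erd\H{o}s--Kac law for $\Omega(m^2+n^2)$ --- equivalently for $\Omega$ of the norm over $\Z[i]$ --- with enough uniformity over congruence classes modulo $2+i$ (and, more generally, over splitting primes) to decouple the value of $\Omega$ from the sublattice constraint. This uniform distributional estimate, which is the content of the stronger ``dynamical theorem'' alluded to above, is the crux; once it is in hand, the $T$-invariance of every weak-$*$ limit follows and \cref{thm:ThmD} is proved.
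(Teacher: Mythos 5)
First, a point of comparison: this paper does not prove \cref{thm:ThmD} at all --- it is imported verbatim from \cite{Donoso_Le_Moreira_Sun_Averages_multiplicative_Gaussians:2024}, where it is deduced from a stronger dynamical statement (Theorem E there) whose proof rests on a theory of multiplicative functions over $\Z[i]$ (K\'atai-type orthogonality and Hal\'asz-type mean value theorems, of which \cref{thm:donoso_le_moreira_sun_multiplicative_real} is the real-valued instance). Indeed, the present paper explicitly disclaims being able to prove results of this type with its own machinery: the discussion around \eqref{equation:unexplored_averages} states that the number-field Erd\H{o}s--Kac and Sath\'e--Selberg inputs behind \cref{thm:dynamical_omega_number_fields} do not suffice for averages along $\Omega(\mathcal{N}(\mathfrak{a}))$, and the opening of the subsection on \eqref{equation:averages_omega_squares} notes that an Erd\H{o}s--Kac theorem for $\Omega(m^2+n^2)$, together with uniform asymptotics for the weights $|\{1\le m,n\le N:\Omega(m^2+n^2)=k\}|/N^2$, is ``not currently available'' in the literature.

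This is exactly where your proposal has a genuine gap. The skeleton (empirical measures $\nu_N$, $T$-invariance of weak-$*$ limits, unique ergodicity) is the correct and standard reduction, and the $(2+i)$-multiplication identity $\Omega\bigl((2m-n)^2+(m+2n)^2\bigr)=\Omega(m^2+n^2)+1$ is correct. But the step you yourself flag as the crux --- that the average of $f\bigl(T^{\Omega(\mathcal{N}(w))}x\bigr)$ over the sublattice $(2+i)\Z[i]$ inside the tilted square $S([1,N]^2)$ agrees asymptotically with the average over the full box $[1,N]^2$ --- is precisely the unavailable distributional estimate above, and you assume it rather than prove it. The difficulty is not a matter of citation: conditioning on divisibility by a \emph{single fixed} prime is never handled directly even in the classical Bergelson--Richter argument; there one must average over many primes and invoke Tur\'an--Kubilius to kill the factorization bias, while in \cite{Donoso_Le_Moreira_Sun_Averages_multiplicative_Gaussians:2024} the corresponding decoupling is achieved by the multiplicative-function machinery over $\Z[i]$, valid for general dilated F\o lner sequences. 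For a single fixed Gaussian prime, the statement that restriction to $(2+i)\Z[i]$ does not bias the distribution of $\Omega$ of norms is of essentially the same depth as the theorem itself, so the reduction gains nothing. Note also that your target estimate must simultaneously absorb two distinct sources of bias --- the change of region (box versus tilted box, which is why the cited paper works with dilated F\o lner sequences) and the congruence bias on factorization type --- and your sketch addresses neither. As it stands, the proposal reduces \cref{thm:ThmD} to an unproven, and genuinely hard, number-theoretic input; it is an outline of a plausible strategy, not a proof.
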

In the same paper, they raised several questions (for further discussions see \cite[Section 5]{Donoso_Le_Moreira_Sun_Averages_multiplicative_Gaussians:2024}). One of the questions is in the spirit of Loyd’s result about the convergence of the averages in \Cref{thm:ThmD} for general ergodic systems.
\begin{question}\cite[Question 5.2]{Donoso_Le_Moreira_Sun_Averages_multiplicative_Gaussians:2024} \label{question:question_5.2}
Let $(X,\mu,T)$ be a non-atomic ergodic system. Is it true that there exists a measurable set $A \subset X$ such that for $\mu$-almost every $x \in X$,
\[
\limsup_{N\to\infty}\frac{1}{N^2}\sum_{1\leq m, n \leq N} \one_{A}(T^{\Omega(m^2+n^2)}x)
= 1
\]
and
\[
\liminf_{N\to\infty}
\frac{1}{N^2}\sum_{1\leq m, n \leq N} \one_{A}(T^{\Omega(m^2+n^2)}x)
= 0 \, ?
\]
\end{question}
We answer \Cref{question:question_5.2} affirmatively. The method we use to answer this question is more general and yields a sufficient condition for the averages to exhibit the strong sweeping-out property. The criterion can also be applied to generalize Loyd's result (\Cref{thm:Loyd}) to arbitrary number fields, which applies in particular to Gaussian integers; for precise definitions, see Section \ref{section:background}. 
\begin{theorem}\label{thm:pointwise_failure}
Let $ \K $ be a number field, $ \mathcal{O}_\K $ its ring of integers, and $ G_\K $ the set of non-zero ideals of $\mathcal{O}_\K $. Let $(X,\mathcal{X},T, \mu) $ be an ergodic invertible m.p.s. with non-atomic $ \mu $. For either $
c(\mathfrak{a}) = \Omega_\K(\mathfrak{a})$ or $ c(\mathfrak{a}) = \omega_\K(\mathfrak{a})$, for every $\varepsilon>0$
there exists a set $ A \in \mathcal{X} $ such that $\mu(A)<\varepsilon$, and for $\mu$-almost every $ x \in X $,
\[
\limsup_{N \to \infty} \frac{1}{\#\{ \mathfrak{a} \in G_\K : \mathcal{N}(\mathfrak{a}) \leq N \}} \sum_{\substack{\mathfrak{a} \in G_\K \\ \mathcal{N}(\mathfrak{a}) \leq N}} \one_A(T^{c(\mathfrak{a})} x) = 1,
\]
and
\[
\liminf_{N \to \infty} \frac{1}{\#\{ \mathfrak{a} \in G_\K : \mathcal{N}(\mathfrak{a}) \leq N \}} \sum_{\substack{\mathfrak{a} \in G_\K \\ \mathcal{N}(\mathfrak{a}) \leq N}} \one_A(T^{c(\mathfrak{a})} x) = 0.
\]
\end{theorem}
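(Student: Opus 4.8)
The plan is to reduce the statement to a distributional fact about the counting function of $c(\mathfrak{a})$ over ideals, and then run Loyd's dynamical argument (from \cref{thm:Loyd}) with this distribution in place of the integer distribution of $\Omega$. Writing $G_\K(N) = \{\mathfrak{a}\in G_\K : \mathcal{N}(\mathfrak{a})\le N\}$ and
\[
\pi_N(k) = \frac{|\{\mathfrak{a}\in G_\K(N) : c(\mathfrak{a})=k\}|}{|G_\K(N)|},
\]
the normalized average in \cref{thm:pointwise_failure} becomes the weighted orbit sum $\sum_{k\ge 0}\pi_N(k)\,\one_A(T^k x)$. Thus everything is governed by the family of probability measures $\{\pi_N\}_N$ on $\N$, and the task is to show that this family has exactly the structural features that drive the strong sweeping-out phenomenon.

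First I would establish the key number-theoretic lemma: a Sathe--Selberg/Landau-type asymptotic for the number of ideals of bounded norm with a prescribed value of $c$. The right tool is the Selberg--Delange method applied to the Dirichlet series $F_z(s) = \sum_{\mathfrak{a}\in G_\K} z^{c(\mathfrak{a})}\mathcal{N}(\mathfrak{a})^{-s}$, which for $c=\Omega_\K$ factors as $\zeta_\K(s)^{z} H_z(s)$ with $H_z$ holomorphic and nonvanishing in a suitable half-plane (and similarly, with an adjusted Euler product, for $c=\omega_\K$). Here $\zeta_\K$ is the Dedekind zeta function, whose simple pole at $s=1$, with residue the class-number/regulator constant, plays the role that $\zeta$ plays over $\Z$. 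Combined with Landau's prime ideal theorem---equivalently with $\sum_{\mathcal{N}(\mathfrak{p})\le x} \mathcal{N}(\mathfrak{p})^{-1} = \log\log x + O(1)$, which is identical to the rational case since the number of prime ideals of norm $\le x$ is $\sim x/\log x$---this yields that $c(\mathfrak{a})$, over ideals ordered by norm, is asymptotically Poisson/Gaussian with mean and variance both $\sim \log\log N$. In particular $\pi_N$ is concentrated on a window centered at $\log\log N$ of width $\asymp\sqrt{\log\log N}$, this window marches off to infinity and simultaneously widens as $N\to\infty$, and $\max_k \pi_N(k)\to 0$. This is precisely the content that the integer case of $\Omega$ supplies in Loyd's argument, and it is the main obstacle: one must carry the Selberg--Delange analysis uniformly in $z$ on a circle and control the secondary terms coming from the geometry of $\K$ (in particular the degree-dependent error in ideal counting, $|G_\K(N)| = \kappa_\K N + O(N^{1-1/[\K:\Q]})$), handling $\Omega_\K$ and $\omega_\K$ in parallel.

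With the lemma in hand, the dynamical construction proceeds as in Loyd's proof. Since $\mu$ is non-atomic and $T$ is ergodic, Rokhlin's lemma provides towers of arbitrary height and small error; choosing a rapidly increasing sequence of scales $N_1 < N_2 < \cdots$ so that the corresponding concentration windows $W_j = [\log\log N_j - C\sqrt{\log\log N_j},\ \log\log N_j + C\sqrt{\log\log N_j}]$ are pairwise disjoint and of lengths tending to infinity, one builds a single set $A$ with $0 < \mu(A) < \varepsilon$ as a union of tower levels so that along a subsequence of windows the orbit $\{T^k x\}_{k\in W_j}$ lies in $A$ for a $\pi_{N_j}$-overwhelming proportion of $k$, and along the complementary subsequence lies outside $A$ for a $\pi_{N_j}$-overwhelming proportion. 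The disjointness of the windows renders the relevant events across scales essentially independent, so a Borel--Cantelli argument upgrades the construction to hold for $\mu$-almost every $x$, giving $\limsup_N \sum_k \pi_N(k)\one_A(T^k x)=1$ and $\liminf_N \sum_k \pi_N(k)\one_A(T^k x)=0$.

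Finally I would note that nothing in this dynamical step uses features special to $\Z$ or to $\Omega$ beyond the three distributional properties isolated above (diffuseness, marching windows, growing width); hence the identical argument covers both $c=\Omega_\K$ and $c=\omega_\K$ once the counting lemma is proved for each. The verification that the Selberg--Delange asymptotics for ideals genuinely match those for $\Omega$ on $\N$---so that Loyd's quantitative estimates transfer verbatim---is where the bulk of the technical care is required.
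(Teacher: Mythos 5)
Your overall route coincides with the paper's: rewrite the average as $\sum_{k}\pi_N(k)\,\one_A(T^kx)$, feed in Sath\'e--Selberg and Erd\H{o}s--Kac type asymptotics for ideals, and then transfer Loyd's sweeping-out machinery. The number-theoretic half of your plan is sound and is exactly what the paper imports from Wu and Liu (\cref{lemma:approximation_level_sets} and \cref{thm:hardy_ramanujan}); re-deriving it via Selberg--Delange applied to $\zeta_\K(s)^z H_z(s)$ is precisely Wu's method, so nothing is lost there.

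The genuine gap is in the dynamical half. Your mechanism --- fix scales $N_1<N_2<\cdots$ with pairwise disjoint windows $W_j$, build a single set $A$ with $\mu(A)<\varepsilon$, and invoke ``essential independence'' of the events across scales plus Borel--Cantelli --- would fail in the generality required. First, independence across disjoint time-windows is simply not available in a general ergodic system: the theorem must cover irrational rotations (ergodic, invertible, non-atomic), where the values $\one_A(T^kx)$ over distinct $k$ are strongly correlated no matter how far apart the windows are; disjointness of the $W_j$ buys nothing. Second, there is a quantitative obstruction to any fixed-scale scheme: for a fixed scale $N$ and fixed $A$ with $\mu(A)<\varepsilon$, Markov's inequality gives $\mu\bigl(\{x:\sum_k \pi_N(k)\one_A(T^kx)\ge \tfrac12\}\bigr)\le 2\varepsilon$, so along any prescribed sequence of scales the ``good'' events each have measure $O(\varepsilon)$, and upgrading their $\limsup$ to a full-measure statement with value $1$ would require exactly the second-Borel--Cantelli independence you cannot arrange. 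What actually drives Loyd's argument (quoted by the paper as \cref{lem:epsilon_sweeping}) is different: the scale is chosen adaptively depending on the point $x$ --- since $\log_2 N$ is unbounded and slowly varying, for each level $\ell$ of a Rokhlin tower one can slide the Gaussian window $I_{N,C}$ shifted by $\ell$ entirely into a block of $A$, or entirely into a gap, so the $\sup/\inf$ bounds hold for \emph{every} point of the tower simultaneously; the a.e.\ statements, the exact values $1$ and $0$, and the smallness of $\mu(A)$ then come from the Baire-category argument in the measure algebra (\cref{prop:completeness}, intersecting the dense $G_\delta$ sets $R_{1/m}$ as in \cref{teo:failure_pointwise}), not from probabilistic independence. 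You also miss the structural shortcut that makes the paper's proof short: the approximating operators $T_{N,C}$ do not depend on $\K$ at all, so Loyd's sweeping-out lemma applies verbatim as a black box, and the only new work needed is the sandwich $T_{N,C}(A)(x)+o(1)\le G_N(A)(x)\le T_{N,C}(A)(x)+\varepsilon+o(1)$ supplied by the number theory; redoing the dynamical construction, as you propose, is both unnecessary and the place where your sketch breaks.
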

In particular, Loyd’s result (\Cref{thm:Loyd}) corresponds to the case $ \K = \Q $ and $ c = \Omega_{\Q} =\Omega$ in \Cref{thm:pointwise_failure}. 
We can also show that several other averages exhibit the strong sweeping-out property; see \Cref{section:failure_of_pointwise}.

Given that \Cref{thm:Loyd} remains valid when we consider an arbitrary number field, by \Cref{thm:pointwise_failure}, it is natural to ask whether recent results \cite{loyd_mondal_omega:2025} can be extended to this more general setting.

Regarding results where we do have convergence, we propose the following generalization of \cref{thm:bergelson_richter_dynamical_theorem} to an arbitrary number field.
\begin{theorem}\label{thm:dynamical_omega_number_fields} Let $\K$ be a number field, $(X,T)$ a uniquely ergodic system with invariant measure $\mu$, $f \in C(X)$, and $x \in X$. For either $
c(\mathfrak{a}) = \Omega_\K(\mathfrak{a})$ or $ c(\mathfrak{a})= \omega_\K(\mathfrak{a})$, we have
\[ \lim_{N\to \infty}\frac{1}{\#\{ \mathfrak{a} \in G_{\mathbb{K}}: \mathcal{N}(\mathfrak{a})\leq N \}}\sum_{\substack{\mathfrak{a} \in G_\K \\ \mathcal{N}(\mathfrak{a}) \le N}}f(T^{c(\mathfrak{a})}x)=\int_{X}fd\mu .\]
\end{theorem}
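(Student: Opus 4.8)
The plan is to exploit that $f\bigl(T^{c(\mathfrak a)}x\bigr)$ depends on $\mathfrak a$ only through the integer $c(\mathfrak a)$, so that the average collapses to a weighted orbit average. Writing $\pi_\K(N)=|\{\mathfrak a\in G_\K:\mathcal N(\mathfrak a)\le N\}|$ and letting $\nu_N$ be the law of $c$ under the normalized counting measure, i.e.
\[
\nu_N(k)=\frac{1}{\pi_\K(N)}\,\bigl|\{\mathfrak a\in G_\K:\mathcal N(\mathfrak a)\le N,\ c(\mathfrak a)=k\}\bigr|,
\]
the quantity to be analyzed is $S_{\nu_N}f(x)=\sum_{k\ge0}\nu_N(k)f(T^kx)$. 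After subtracting the mean I may assume $\int_X f\,d\mu=0$, and the goal becomes $S_{\nu_N}f(x)\to 0$ \emph{uniformly in $x$}. Here I would record the standard ideal-counting fact $\pi_\K(N)\sim\kappa_\K N$ and isolate the two features of $\nu_N$ that drive the argument: a Fourier (equidistribution) decay and a spreading/concentration profile.

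The number-theoretic core is to establish these features for \emph{both} $c=\Omega_\K$ and $c=\omega_\K$ via the Dedekind zeta function and the Landau--Selberg--Delange method. For $z$ on the unit circle the generating Dirichlet series factors as an Euler product over prime ideals,
\[
\sum_{\mathfrak a\in G_\K} z^{c(\mathfrak a)}\,\mathcal N(\mathfrak a)^{-s}=\zeta_\K(s)^{z}\,H_c(s,z),
\]
with $H_c(\,\cdot\,,z)$ holomorphic in a neighborhood of $s=1$; the two choices $c=\Omega_\K,\omega_\K$ differ only in the harmless factor $H_c$ (for $\omega_\K$ one absorbs the squarefree-type correction). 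Since $\zeta_\K$ has a simple pole at $s=1$, a contour/Tauberian extraction yields mean-value asymptotics of the shape $C(z)\,N(\log N)^{z-1}$, exactly as over $\Q$. Taking $z=e(\theta)$ gives the key decay $\widehat{\nu_N}(\theta)=S_{\nu_N}\!\bigl(e(\theta\,\cdot)\bigr)\to 0$ for every $\theta\in(0,1)$ (the exponent has real part $\cos 2\pi\theta-1<0$), and taking $z$ near $1$ gives that $c(\mathfrak a)$ concentrates around $\log\log N$ with variance tending to infinity and an approximately Gaussian profile, identical to leading order for $\Omega_\K$ and $\omega_\K$.

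Finally I would feed these into unique ergodicity to get convergence for \emph{every} $x$. The discrete-spectrum contribution is handled directly: continuous eigenfunctions satisfy $f(T^kx)=e(k\theta)f(x)$, so $S_{\nu_N}f(x)=\widehat{\nu_N}(\theta)f(x)\to0$ pointwise by the decay above. For the remaining part I would use that in a uniquely ergodic system the window averages converge uniformly, $\frac1W\sum_{j=0}^{W-1}f(T^{m+j}x)\to\int_X f\,d\mu$ uniformly in $m$ and $x$; combined with the fact that $\nu_N$ is flat (its sup-norm tends to $0$) and spreads over windows of length tending to infinity, a layer-cake decomposition of $\nu_N$ into long uniform windows (equivalently, a summation-by-parts comparison against these window averages) forces $S_{\nu_N}f(x)\to0$ uniformly. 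I expect the main obstacle to be precisely this pointwise upgrade: the law $\nu_N$ is sharply concentrated, and it is only the interplay between its slow spreading and the uniform window averaging afforded by unique ergodicity that rescues convergence for every $x$ --- the very mechanism that collapses once unique ergodicity (or continuity of $f$) is dropped, as quantified by \Cref{thm:pointwise_failure}. A secondary technical difficulty is securing the analytic estimate $\widehat{\nu_N}(\theta)\to 0$ with enough uniformity to run this scheme simultaneously for $\Omega_\K$ and $\omega_\K$.
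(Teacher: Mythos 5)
Your proposal is workable and, at its core, rests on the same two pillars as the paper's proof: the Gaussian profile of the level sets of $\Omega_\K$ and $\omega_\K$, and unique ergodicity. For the number-theoretic input the paper simply cites Wu's Selberg--Delange-type asymptotics (\Cref{lemma:approximation_level_sets}) together with a Hardy--Ramanujan statement (\Cref{thm:hardy_ramanujan}), rather than re-running the Landau--Selberg--Delange analysis of $\zeta_\K(s)^z H_c(s,z)$ as you propose; that part of your plan is essentially a re-derivation of what is cited. Where you genuinely diverge is the endgame. The paper proves an asymptotic shift-invariance statement (\Cref{thm:teo_arimetico_fields}): writing $w_N(k)$ for your $\nu_N(k)$, it shows $\sum_k |w_N(k+1)-w_N(k)|\to 0$, and then concludes softly: every weak-* accumulation point of the empirical measures $\frac{1}{|\{\mathfrak a:\,\mathcal N(\mathfrak a)\le N\}|}\sum \delta_{T^{c(\mathfrak a)}x}$ is $T$-invariant, hence equals $\mu$ by unique ergodicity. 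You instead make a quantitative comparison of $S_{\nu_N}f(x)$ against window averages, using uniform convergence of Birkhoff averages in uniquely ergodic systems. Both finishes are valid; the paper's is shorter and needs no choice of window length, while yours makes the mechanism (slow spreading of $\nu_N$ against uniform windows) explicit and gives uniformity in $x$ directly.

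Two points in your dynamical step need repair. First, your stated sufficient condition for the layer-cake/summation-by-parts argument --- that $\nu_N$ is flat ($\|\nu_N\|_\infty\to 0$) and spreads over long windows --- is not sufficient. Take $\nu_N$ uniform on the even integers in $[0,2W_N]$ with $W_N\to\infty$: it is flat and spread out, yet on the two-point rotation with $f$ the mean-zero eigenfunction one gets $S_{\nu_N}f(x)=f(x)\not\to 0$. What the windowing argument actually needs is control of the discrete derivative, namely $W_N\sum_k|\nu_N(k+1)-\nu_N(k)|\to 0$ for some $W_N\to\infty$; for a unimodal profile this total variation equals $2\|\nu_N\|_\infty$, which is the only sense in which flatness enters. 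So at this step you must invoke the (approximate) unimodality of the Gaussian profile, not flatness alone --- and note that the quantity you then need controlled is exactly the content of the paper's \Cref{thm:teo_arimetico_fields}. Second, the discrete-spectrum/Fourier-decay reduction is redundant and, as stated, ill-founded: in a general uniquely ergodic system $C(X)$ does not decompose into continuous eigenfunctions plus a complementary ``remaining part'' that you can treat separately. Once the windowing argument is in place it applies to every mean-zero $f\in C(X)$, eigenfunction or not, so that reduction --- and with it the estimate $\widehat{\nu_N}(\theta)\to 0$ --- can be deleted from the proof.
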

In particular, by choosing the number field $\K=\Q$ and $c(\mathfrak{a})=\Omega_{\Q}(\mathfrak{a})$, we recover Bergelson and Richter’s statement. Also, by applying this theorem to the $2$-point rotation, we obtain
\begin{equation} \label{eq:vanishing_omega_number_fields}
\lim_{N\to \infty}\frac{1}{\#\{ \mathfrak{a} \in G_{\mathbb{K}}: \mathcal{N}(\mathfrak{a})\leq N \}}\sum_{\substack{\mathfrak{a} \in G_\K \\ \mathcal{N}(\mathfrak{a}) \le N}}(-1)^{\Omega_\K(\mathfrak{a})}=0,
\end{equation}
which is equivalent to the prime number theorem for number fields, also known as Landau’s prime ideal theorem. It is worth noting that our proof of \Cref{thm:dynamical_omega_number_fields} does not provide a new proof of Landau’s prime ideal theorem, since it relies on a strengthening of that theorem. To see a proof of Landau’s prime ideal theorem, which is based on the ideas of Bergelson and Richter, we refer to \cite{Burgin_new_elementary_landau:2025}. 

We remark that our methods for proving \Cref{thm:dynamical_omega_number_fields} are not enough to prove the convergence for uniquely ergodic systems of other types of ergodic averages. For instance, we are not able to show the convergence of 
\begin{equation}\label{equation:unexplored_averages}
\frac{1}{\#\{ \mathfrak{a} \in G_{\mathbb{K}}: \mathcal{N}(\mathfrak{a} )\leq N \}}\sum_{\substack{\mathfrak{a} \in G_\K \\ \mathcal{N}(\mathfrak{a}) \le N}}f(T^{\Omega(\mathcal{N} (\mathfrak{a}))}x).\end{equation}
The methods of \cite{Donoso_Le_Moreira_Sun_Averages_multiplicative_Gaussians:2024} are able to deal with these and also more types of averages, in the particular case of the Gaussian integers.
We expect that the convergence of the averages as in \eqref{equation:unexplored_averages}, and possibly more general results, should follow from a “dynamical statement” concerning multiplicative actions defined in terms of prime ideals (or prime elements) of a number field. It would be interesting to show such a statement, even in particular classes, such as class number one fields (which are P.I.D.s) or imaginary quadratic fields. This problem remains open, but it is currently under investigation. A positive result in number fields would imply \cite[Theorem A]{Bergelson_Richter_dynamical_pnt:2022}, \cite[Theorem E]{Donoso_Le_Moreira_Sun_Averages_multiplicative_Gaussians:2024}, \Cref{thm:dynamical_omega_number_fields} of this work, and would have many more consequences.
\section*{Acknowledgements}
We thank Pieter Moree for pointing out useful references for the number-theoretic preliminaries. We also thank Sovanlal Mondal for drawing our attention to his joint paper with K. Loyd, and for helpful conversations that led to the full resolution of \Cref{question:question_5.2}. This insight allowed us to improve a partial answer given in an earlier version of this article.
We are also very grateful to the anonymous referee for valuable comments, which helped improve the clarity and presentation of our results.

\section{Preliminaries}\label{section:background}
\subsection{Notation}
 We denote the set of positive integers by $\N$ and the set of (rational) prime numbers by $\P$. Throughout this paper, variables such as $ m$ and $ n$ implicitly denote positive integers unless otherwise stated. For a finite set $A$, we denote its cardinality by $\#A$. For an infinite set $A\subseteq\R$, $|A|$ denotes its Lebesgue measure. Following standard conventions, $\log_2(x)$ denotes $\log(\log(x))$. For functions
$f,g\colon \N\to \R$ we denote $f=O(g)$ if there exists a constant $C$ and an $N_0$ such that for all $N\geq N_0$, $|f(N)|\leq C|g(N)|$, and $f=o_{N\to \infty}(g)$ to denote $\lim_{N\to \infty}\frac{ f(N)}{g(N)}=0$.

We follow the notation of Wu (\cite{Wu_sharpening_Selberg-Delange:1996}). $ \mathbb{K} $ denotes a number field and $\mathcal{O}_{\K}$ denotes its ring of integers. We denote an ideal of $\mathcal{O}_{\mathbb{K}} $, usually by the letter $\mathfrak{a} $, a prime ideal by the letter $\mathfrak{p}$, and $ G_{\mathbb{K}}$ denotes the multiplicative semigroup of non-zero ideals of $\mathcal{O}_{\mathbb{K}} $. The (absolute) norm of an ideal $ \mathfrak{a} $ is denoted by $ \mathcal{N}(\mathfrak{a}) $.
An ideal $ \mathfrak{a} $ can be decomposed in terms of finitely many prime ideals (see, e.g., \cite{Marcus_Number_Fields:1977}), and the decomposition is unique up to the order of the factors. If we write, 
$ \mathfrak{a} = \prod_{i=1}^{\infty} \mathfrak{p}_i^{\alpha_i}$, $\alpha_i\in \N
\cup\{0\}$, \footnote{We adopt this notation of convenience. Note that the product contains only finitely many $i$ such that $\alpha_i>0$.}
we define:
\begin{itemize}
\item $\omega_{\mathbb{K}}(\mathfrak{a}) $ as the number of distinct prime ideals dividing $ \mathfrak{a} $.
\item $ \Omega_{\mathbb{K}}(\mathfrak{a}) $ as the total number of prime ideal factors of $ \mathfrak{a} $, counted with multiplicity, i.e., $\sum \alpha_i $.
\end{itemize}
 $\Omega_{\K}$ satisfies the property $\Omega_{\K}(\mathfrak{a}\cdot \mathfrak{b})=\Omega_{\K}(\mathfrak{a})+\Omega_{\K}(\mathfrak{b})$ for $\mathfrak{a},\mathfrak{b} \in G_\K$. Any function that fulfills this property is called completely additive. Similarly, $\omega_{\K}$ satisfies $\omega_{\K}(\mathfrak{a}\cdot \mathfrak{b})=\omega_{\K}(\mathfrak{a})+\omega_{\K}(\mathfrak{b})$, provided that $\mathfrak{a}$ and $\mathfrak{b}$ do not share a prime factor in their factorizations. Any function satisfying this property is called additive.
We also denote the {\em Dedekind Zeta function} of a number field as the function\[\zeta_{\K}(s) =\sum_{\a\in G_\K}\frac{1}{\mathcal{N}(\mathfrak{a})^s}.\]
This function is initially defined only for $D=\{z\in \C:\Re(z)>1\}$, but admits a meromorphic extension to the complex plane with a single pole at $z=1$, which we still denote $\zeta_\K(\cdot)$. The residue at this pole, denoted by $\rho_\K$, is related to the growth of ideals.
\begin{theorem}[\cite{RamVanOrder}] \label{thm:weber_theorem}Let $\K$ be a number field. Then there exists $0<\delta <1$ such that
\[ \#\{\mathfrak{a} \in G_{\mathbb{K}}: \mathcal{N}(\mathfrak{a}) \leq N \}=\sum_{\substack{\mathfrak{a} \in G_{\mathbb{K}} \\
\mathcal{N}(\mathfrak{a}) \leq N }} 1=\rho_\K N+O(N^{\delta}).\]
\end{theorem}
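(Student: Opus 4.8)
The plan is to extract the asymptotics from the analytic properties of $\zeta_\K$ by treating the counting function as the summatory function of the coefficients of a Dirichlet series. Write $a_n = |\{\a \in G_\K : \mathcal{N}(\a) = n\}|$ for the number of ideals of norm exactly $n$. Unique factorization of ideals together with multiplicativity of the norm gives $a_n \ge 0$ and the identity $\zeta_\K(s) = \sum_{n \ge 1} a_n n^{-s}$ for $\Re(s) > 1$; moreover each prime ideal above a rational prime $p$ has norm a power of $p$, so $a_n$ is dominated by a divisor-type function and satisfies $a_n = O(n^{\varepsilon})$ for every $\varepsilon > 0$. The quantity to estimate is $A(N) := \sum_{n \le N} a_n = |\{\a \in G_\K : \mathcal{N}(\a) \le N\}|$, and the goal is to isolate the main term $\rho_\K N$ with a power-saving remainder.

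First I would apply a truncated Perron formula: for $c = 1 + 1/\log N$ and a height $T$ to be chosen later,
\[ A(N) = \frac{1}{2\pi i} \int_{c - iT}^{c + iT} \zeta_\K(s)\, \frac{N^s}{s}\, ds + O\!\left(\frac{N^{1+\varepsilon}}{T}\right), \]
where the error is the standard truncation term controlled by the bound $a_n = O(n^{\varepsilon})$. Since $\zeta_\K$ is holomorphic on $\C \setminus \{1\}$ with a simple pole of residue $\rho_\K$ at $s = 1$, I would then shift the contour to a line $\Re(s) = \sigma_0$ with $0 < \sigma_0 < 1$. The rectangle encloses only $s = 1$, and the residue theorem contributes
\[ \operatorname{Res}_{s=1} \zeta_\K(s)\, \frac{N^s}{s} = \rho_\K N, \]
which is exactly the desired main term; what remains is the integral over the left vertical line $\Re(s) = \sigma_0$ together with the two horizontal segments at heights $\pm T$.

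The main obstacle is bounding this remaining contour so as to produce an exponent $\delta < 1$. For this I would invoke the functional equation of $\zeta_\K$ (relating $s$ and $1 - s$ through Gamma factors) together with the Phragmén--Lindelöf convexity principle, which yields polynomial growth estimates $|\zeta_\K(\sigma + it)| \ll_{\varepsilon} (1 + |t|)^{\frac{d}{2}(1-\sigma) + \varepsilon}$ in the strip $0 \le \sigma \le 1$, where $d = [\K : \Q]$. Inserting these bounds shows the shifted contour contributes $O(N^{\sigma_0} T^{\frac{d}{2}(1 - \sigma_0) + \varepsilon})$ up to logarithmic factors. Balancing this against the truncation error $N^{1+\varepsilon}/T$ by choosing $\sigma_0$ at a fixed value in $(0,1)$ and $T$ a suitable power of $N$ produces a total error $O(N^{\delta})$ with $\delta = \delta(d) < 1$, which is all that is required. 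As an alternative that avoids the functional equation entirely, one could proceed geometrically: partition $G_\K$ into ideal classes and count integral ideals of norm $\le N$ in each class as lattice points in a homogeneously expanding region, where Minkowski's geometry of numbers supplies the volume as the main term and a boundary estimate $O(N^{1 - 1/d})$ as the error; summing over the finitely many classes recovers the statement with $\delta = 1 - 1/d$.
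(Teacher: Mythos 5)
The paper does not prove this statement at all: it is imported verbatim from Wu's paper (the result itself is the classical ideal-counting theorem of Weber/Landau, whence the label), so there is no internal argument to compare yours against; the question is only whether your reconstruction is sound, and in outline it is. Your Perron-formula route is correct: the coefficient bound $a_n = O(n^{\varepsilon})$, the residue computation $\operatorname{Res}_{s=1}\zeta_\K(s)N^s/s = \rho_\K N$, the convexity exponent $\tfrac{d}{2}(1-\sigma)$, and the balancing of $N^{1+\varepsilon}/T$ against $N^{\sigma_0}T^{\frac{d}{2}(1-\sigma_0)+\varepsilon}$ all work and yield a power saving $\delta<1$; the one thing you should flag as a serious input is that polynomial growth of $\zeta_\K$ in the critical strip rests on Hecke's analytic continuation and functional equation, which is a deeper theorem than the statement being proved. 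Your second, geometric route is the classical proof (it is essentially the one in Marcus's \emph{Number Fields}, which this paper cites for ideal factorization) and gives the explicit exponent $\delta = 1-1/d$; however, your sketch glosses the one genuinely delicate point: after fixing an ideal class and reducing to counting principal ideals $(\alpha)$ with $|\mathcal{N}(\alpha)| \le N\cdot\mathcal{N}(\mathfrak{b})$, the generators $\alpha$ are only determined up to multiplication by units, so when the unit rank is positive the region $\{|\mathcal{N}(x)|\le 1\}$ is unbounded and one must first cut out a fundamental cone for the unit action via the logarithmic embedding, then verify that the resulting truncated region has $(d-1)$-Lipschitz parametrizable boundary before the lattice-point count gives volume plus $O(N^{1-1/d})$. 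With that step supplied, either of your two arguments is a complete and correct proof; the analytic one generalizes more readily (it is the engine behind the Selberg--Delange refinements in Wu that the rest of this paper actually uses), while the geometric one is more elementary and gives an explicit $\delta$.
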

\subsection{\texorpdfstring{Asymptotic behavior of $\omega_{\K}(\cdot)$ and $\Omega_\K(\cdot)$}{Asymptotic behavior of ω and Ω}}
To prove \Cref{thm:dynamical_omega_number_fields} and \Cref{thm:pointwise_failure}, we require some number-theoretical input. In particular, we need generalizations of the classical Hardy--Ramanujan theorem and of the Sath\'e--Selberg theorem, both of which have extensions to arbitrary number fields. These two results describe statistical properties of $\omega_{\K}(\cdot)$ and $\Omega_{\K}(\cdot)$.
First, we start by presenting a version of the Erd\H{o}s--Kac theorem for number fields, as established by Liu in \cite{Liu_generalization_Erdos-Kac_appl:2004}. While this specific form of the theorem is not explicitly stated in Liu’s work, it can be easily derived by combining Lemma 3 and Theorem 1 from that paper.
\begin{theorem}[Erd\H{o}s--Kac theorem for number fields]\label{thm:erdos_kac}
 Let $\K$ be a number field. For either $
c(\mathfrak{a}) = \Omega_\K(\mathfrak{a})$ or $ c(\mathfrak{a})=\omega_{\K}(\mathfrak{a})$, and for every $\gamma\in \R$
\[
\lim_{x \to \infty,x\in \Q}
\frac{\#\{
\mathfrak{a}\in G_\K : \mathcal{N}(\mathfrak{a}) \leq x,\,
\frac{ c(\mathfrak{a}) - \log_2 x }{ \sqrt{ \log_2 x } }
\leq \gamma
\}}{\#\{ \mathfrak{a}\in G_\K : \mathcal{N}(\mathfrak{a}) \leq x \}}
= G(\gamma),
\]
where $ G(\gamma):=\frac{1}{\sqrt{2\pi}}\int_{-\infty}^\gamma e^{-\frac{1}{2}t^2}dt $ is the standard Gaussian distribution function.
\end{theorem}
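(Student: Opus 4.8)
The plan is to establish the central limit theorem by the method of characteristic functions, reducing the distributional statement to a Selberg--Delange (Sath\'e--Selberg) asymptotic for a weighted ideal-counting sum. Write $\pi_\K(x) = |\{\mathfrak{a} \in G_\K : \mathcal{N}(\mathfrak{a}) \leq x\}|$, which equals $\rho_\K x + O(x^\delta)$ by \Cref{thm:weber_theorem}. By L\'evy's continuity theorem, it suffices to prove that for each fixed $t \in \R$ the characteristic function
\[
\varphi_x(t) = \frac{1}{\pi_\K(x)} \sum_{\substack{\mathfrak{a} \in G_\K \\ \mathcal{N}(\mathfrak{a}) \leq x}} \exp\!\left( i t\, \frac{c(\mathfrak{a}) - \log_2 x}{\sqrt{\log_2 x}} \right)
\]
converges to $e^{-t^2/2}$ as $x \to \infty$. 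Setting $z = \exp(i t / \sqrt{\log_2 x})$ and pulling out the deterministic centering, this reads $\varphi_x(t) = \exp(-i t \sqrt{\log_2 x})\, S(x,z)/\pi_\K(x)$, where $S(x,z) = \sum_{\mathcal{N}(\mathfrak{a}) \leq x} z^{c(\mathfrak{a})}$. Everything then hinges on understanding $S(x,z)$ uniformly as $z$ ranges over a shrinking complex neighborhood of $1$.

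To access $S(x,z)$ I would study the associated Dirichlet series
\[
F(s,z) = \sum_{\mathfrak{a} \in G_\K} \frac{z^{c(\mathfrak{a})}}{\mathcal{N}(\mathfrak{a})^s} = \prod_{\mathfrak{p}} F_{\mathfrak{p}}(s,z),
\]
whose Euler factor is $(1 - z\,\mathcal{N}(\mathfrak{p})^{-s})^{-1}$ when $c = \Omega_\K$ and $1 + z\,\mathcal{N}(\mathfrak{p})^{-s}(1 - \mathcal{N}(\mathfrak{p})^{-s})^{-1}$ when $c = \omega_\K$. Comparing with the Euler product $\zeta_\K(s) = \prod_{\mathfrak{p}}(1 - \mathcal{N}(\mathfrak{p})^{-s})^{-1}$, a short computation shows that in both cases $F(s,z) = \zeta_\K(s)^z\, G(s,z)$, where $G(s,z) = F(s,z)\,\zeta_\K(s)^{-z}$ is given by a prime-ideal product that converges absolutely, and hence is holomorphic, in the half-plane $\Re(s) > 1/2$ for $z$ in any fixed compact set; the first-order term $z\,\mathcal{N}(\mathfrak{p})^{-s}$ cancels and only an $O(\mathcal{N}(\mathfrak{p})^{-2s})$ contribution survives. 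Crucially, the choice of $c$ (that is, $\Omega_\K$ versus $\omega_\K$) affects only the holomorphic factor $G$ and not the exponent $z$ of the $\zeta_\K$-singularity, which is why both produce the same limit law. Note also that $G(s,1) \equiv 1$, so $G(1,1) = 1$.

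Since $\zeta_\K$ has a simple pole at $s = 1$ with residue $\rho_\K$ and admits the analytic continuation and zero-free region underlying the prime ideal theorem, the Selberg--Delange machinery in the number-field form sharpened by Wu yields, uniformly for $z$ in a complex neighborhood of $1$,
\[
S(x,z) = \frac{G(1,z)}{\Gamma(z)}\, \rho_\K\, x\, (\log x)^{z-1}\left(1 + O\!\left(\tfrac{1}{\log x}\right)\right),
\]
the factor $\Gamma(z)^{-1}$ and the power $(\log x)^{z-1}$ coming from a Hankel contour around $s = 1$. Dividing by $\pi_\K(x) = \rho_\K x (1 + o(1))$ and inserting $z = \exp(i t/\sqrt{\log_2 x})$, I would use $(\log x)^{z-1} = \exp((z-1)\log_2 x)$ together with the expansion $(z-1)\log_2 x = i t \sqrt{\log_2 x} - \tfrac{t^2}{2} + O((\log_2 x)^{-1/2})$. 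The term $\exp(i t \sqrt{\log_2 x})$ exactly cancels the centering factor $\exp(-i t \sqrt{\log_2 x})$, while $G(1,z)/\Gamma(z) \to G(1,1)/\Gamma(1) = 1$; this leaves $\varphi_x(t) \to e^{-t^2/2}$, as required.

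The main obstacle will be the uniform Selberg--Delange estimate: one must shift the contour past $\Re(s) = 1$ and evaluate a Hankel integral around the pole, which requires an effective zero-free region and polynomial growth bounds for $\zeta_\K$, together with uniform control of $G(s,z)$ in both $s$ (near $1$) and $z$ (in a complex disk). These are precisely the analytic inputs packaged in Wu's work and in Liu's; the subtlety is the \emph{uniformity} in $z$, since the argument forces $z \to 1$ at the rate $\exp(O(1/\sqrt{\log_2 x}))$ rather than at a single fixed value. An alternative route would be the method of moments: show that the normalized moments of $c(\mathfrak{a})$ converge to the Gaussian moments, which reduces to a number-field Mertens estimate $\sum_{\mathcal{N}(\mathfrak{p}) \leq x} \mathcal{N}(\mathfrak{p})^{-1} = \log_2 x + O(1)$ and an approximate-independence (Kubilius-type) model for the divisibility events $\mathfrak{p} \mid \mathfrak{a}$; but the analytic route above integrates more cleanly with the Selberg--Delange estimates already invoked elsewhere in the paper.
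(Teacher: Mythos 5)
Your proposal is correct in outline, but it takes a genuinely different route from the paper. The paper does not prove \Cref{thm:erdos_kac} analytically at all: it derives the statement by citation, combining Theorem 1 of Liu \cite{Liu_generalization_Erdos-Kac_appl:2004} (a general Erd\H{o}s--Kac theorem established by sieve/moment-method, Kubilius-model style arguments) with Lemma 3 of the same paper (the verification of its hypotheses for prime ideals, essentially a Mertens-type estimate for $G_\K$). That is precisely the ``alternative route'' you mention in your final paragraph and then set aside. What you propose instead --- L\'evy continuity plus a Selberg--Delange analysis of $F(s,z)=\zeta_\K(s)^z G(s,z)$ at $z=\exp(it/\sqrt{\log_2 x})$ --- is the classical analytic proof of Erd\H{o}s--Kac, and it has a real structural advantage in the context of this paper: the uniform-in-$z$ asymptotic for $S(x,z)$ that you invoke is the same estimate from Wu \cite{Wu_sharpening_Selberg-Delange:1996} that underlies \Cref{lemma:approximation_level_sets}, so your route would yield the central limit theorem and the local (level-set) approximation from one computation, with quantitative error terms; the cost is carrying out the contour-shift and Hankel analysis that the paper's citation avoids, and the need to track uniformity as $z\to 1$, which you correctly identify as the crux.

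Two technical corrections to your sketch, neither fatal. First, for $c=\Omega_\K$ the Euler factor $(1-z\,\mathcal{N}(\mathfrak{p})^{-s})^{-1}$ is singular where $z=\mathcal{N}(\mathfrak{p})^{s}$; since prime ideals of norm $2$ can occur, $G(s,z)$ is holomorphic on $\Re(s)>1/2$ only for $|z|<\sqrt{2}$, not ``for $z$ in any fixed compact set.'' This is harmless for your argument because your $z$ tends to $1$, but the claim as written is false, and it is exactly why classical Selberg--Delange treatments of $\Omega$ impose a restriction such as $|z|<2$ (on a suitable half-plane). Second, since $\zeta_\K(s)^z\approx \rho_\K^{\,z}(s-1)^{-z}$ near $s=1$, the main term of $S(x,z)$ should carry $\rho_\K^{\,z}$ rather than $\rho_\K$; again immaterial in the limit, as $\rho_\K^{\,z-1}\to 1$ when $z\to 1$, but it should be stated correctly if this proof is written out in full.
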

As a consequence of \cref{thm:erdos_kac}, we obtain the following version of the Hardy--Ramanujan theorem for number fields.
\begin{theorem}[Hardy--Ramanujan for number fields]\label{thm:hardy_ramanujan} Let $\K$ be a number field. For either $
c(\mathfrak{a}) = \Omega_\K(\mathfrak{a})$ or $ c(\mathfrak{a})=\omega_{\K}(\a)$, for any $\varepsilon>0$ and every sufficiently large $C=C(\varepsilon)>0$, we have
\[ \lim_{x\to \infty,x \in \mathbb{Q}} \frac{ \#\{\mathfrak{a} \in G_{\K}:\mathcal{N}(\mathfrak{a})\leq x,\Bigl|\frac{c(\mathfrak{a})-\log_2(x)}{\sqrt{\log_2(x)}}\Bigr|> C \}}{\#\{\mathfrak{a} \in G_{\K}:\mathcal{N}(\mathfrak{a})\leq x\}}\leq \varepsilon.\] 
\end{theorem}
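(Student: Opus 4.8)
The plan is to obtain \Cref{thm:hardy_ramanujan} as an immediate corollary of the Erd\H{o}s--Kac theorem for number fields (\Cref{thm:erdos_kac}). The content of \Cref{thm:hardy_ramanujan} is nothing more than a large-deviation (tail) estimate for the standardized statistic $\tfrac{c(\mathfrak{a}) - \log_2 x}{\sqrt{\log_2 x}}$: since \Cref{thm:erdos_kac} says this statistic converges in distribution to a standard Gaussian, and Gaussian tails decay to $0$, deviations larger than a fixed threshold $C$ must become negligible once $C$ is taken large.

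To make this precise I would fix $c \in \{\Omega_\K, \omega_\K\}$ and, for $\gamma \in \R$ and $x>0$, introduce the empirical distribution function
\[ F_x(\gamma) := \frac{|\{\mathfrak{a} \in G_\K : \mathcal{N}(\mathfrak{a}) \le x,\ \tfrac{c(\mathfrak{a}) - \log_2 x}{\sqrt{\log_2 x}} \le \gamma\}|}{|\{\mathfrak{a} \in G_\K : \mathcal{N}(\mathfrak{a}) \le x\}|}, \]
so that \Cref{thm:erdos_kac} reads $\lim_{x\to\infty,\,x\in\Q} F_x(\gamma) = G(\gamma)$ for every $\gamma \in \R$. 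First I would split the two-sided tail event $\{\, |\tfrac{c(\mathfrak{a})-\log_2 x}{\sqrt{\log_2 x}}| > C \,\}$ into the one-sided pieces $\{\tfrac{c(\mathfrak{a})-\log_2 x}{\sqrt{\log_2 x}} > C\}$ and $\{\tfrac{c(\mathfrak{a})-\log_2 x}{\sqrt{\log_2 x}} < -C\}$, counting the first exactly as $1 - F_x(C)$ and bounding the second by $F_x(-C)$; this yields the estimate
\[ \frac{|\{\mathfrak{a} \in G_\K : \mathcal{N}(\mathfrak{a}) \le x,\ |\tfrac{c(\mathfrak{a})-\log_2 x}{\sqrt{\log_2 x}}| > C\}|}{|\{\mathfrak{a} \in G_\K : \mathcal{N}(\mathfrak{a}) \le x\}|} \le \bigl(1 - F_x(C)\bigr) + F_x(-C). \]

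Next I would let $x \to \infty$ along the rationals. Because $G$ is continuous, both $C$ and $-C$ are continuity points, so \Cref{thm:erdos_kac} gives $F_x(\pm C) \to G(\pm C)$; using the symmetry $G(-C) = 1 - G(C)$, the right-hand side tends to $2\bigl(1 - G(C)\bigr)$, whence
\[ \limsup_{x\to\infty,\,x\in\Q} \frac{|\{\mathfrak{a} \in G_\K : \mathcal{N}(\mathfrak{a}) \le x,\ |\tfrac{c(\mathfrak{a})-\log_2 x}{\sqrt{\log_2 x}}| > C\}|}{|\{\mathfrak{a} \in G_\K : \mathcal{N}(\mathfrak{a}) \le x\}|} \le 2\bigl(1 - G(C)\bigr). \]
Finally, since $\lim_{C\to\infty} 2(1 - G(C)) = 0$ and $C \mapsto G(C)$ is increasing, given $\varepsilon > 0$ there is $C_0 = C_0(\varepsilon)$ such that $2(1 - G(C)) \le \varepsilon$ for all $C \ge C_0$, which is exactly the assertion for every sufficiently large $C$.

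The hard part will be almost entirely bookkeeping rather than substance: getting the directions of the inequalities right when converting the symmetric tail into the one-sided values $F_x(C)$ and $F_x(-C)$, and observing that the continuity of the Gaussian distribution function $G$ guarantees that $\pm C$ are continuity points, so that convergence in distribution yields the genuine pointwise convergence $F_x(\pm C) \to G(\pm C)$ with negligible boundary point masses. If one wants the exact limit rather than a $\limsup$ bound, to match the ``$\lim \le \varepsilon$'' in the statement, I would additionally note that the point mass at $-C$ vanishes in the limit, again by continuity of $G$, so that the displayed $\limsup$ is in fact a genuine limit equal to $2\bigl(1-G(C)\bigr)$.
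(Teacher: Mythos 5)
Your proposal is correct and follows exactly the paper's route: the paper states \Cref{thm:hardy_ramanujan} as an immediate consequence of the Erd\H{o}s--Kac theorem for number fields (\Cref{thm:erdos_kac}) without spelling out details, and your argument---splitting the two-sided tail into $1-F_x(C)$ and $F_x(-C)$, passing to the limit via the pointwise convergence in \Cref{thm:erdos_kac}, and choosing $C$ so that $2(1-G(C))\le\varepsilon$---is precisely that deduction made explicit. Your closing remark that the limit (not just the $\limsup$) exists, since the point mass at $-C$ vanishes by continuity of $G$, correctly handles the ``$\lim$'' in the statement.
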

This means that, for any small given $\varepsilon > 0$, if we choose a sufficiently large $C$, then at least a ($1-\varepsilon$) fraction of the ideals have a number of prime factors in their decomposition (with or without multiplicity) lying in the moving interval
\begin{equation}
I_{N,C} := \Bigl[ \log_2(N) - C\sqrt{\log_2(N)} , \, \log_2(N) + C\sqrt{\log_2(N)}\Bigr]\cap \Z.
\label{definition:interval_n_c}
\end{equation}
For $ k \in I_{N,C} $, the distribution of the functions $ \omega_{\K} (\cdot)$ and $\Omega_{\K}(\cdot) $ can be described precisely. In the case of integers, this was first established by Erd\H{o}s \cite{Erdos_integers_k_prime_factors:1948}, refined by Sath\'e \cite{Sathe1953I, Sathe1953II, Sathe1954III, Sathe1954IV}, and simplified by Selberg \cite{Selberg1954}. This result is now known as the Sath\'e--Selberg theorem. Wu \cite{Wu_sharpening_Selberg-Delange:1996} later extended the Sath\'e--Selberg theorem to arbitrary number fields and provided asymptotic formulas for the following counting functions: $N_k(x) := \# \{ \mathfrak{a} \in G_{\mathbb{K}} : \mathcal{N}(\mathfrak{a}) \leq x, \Omega_{\mathbb{K}}(\mathfrak{a}) = k \}$ and $\pi_k(x) := \# \{ \mathfrak{a} \in G_{\mathbb{K}} : \mathcal{N}(\mathfrak{a}) \leq x, \omega_{\mathbb{K}}(\mathfrak{a})= k \}$. 
Using the explicit form of the error term, together with \Cref{thm:weber_theorem} and Stirling’s approximation as in \cite[Lemma 3.4]{Loyd_dynamical_omega:2023}, these formulas can be expressed in terms of exponential functions, which is convenient for further applications.
\begin{lemma}\label{lemma:approximation_level_sets}
Let $C>0$, and let $I_{N,C}$ be as above. Then, for $k\in I_{N,C}$
\[ \frac{ N_k(N)}{\# \{ \mathfrak{a} \in G_{\mathbb{K}} : \mathcal{N}(\mathfrak{a}) \leq N\}} = \frac{e^{-\frac{1}{2}(\frac{k-\log_2(N)}{\sqrt{\log_2(N)}})^2}}{\sqrt{2\pi \log_2(N)}}(1+\varepsilon_k(N)) ,\]
where the error terms converge uniformly to zero, i.e.,
\[ \lim_{N\to \infty} \sup_{k\in I_{N,C}} |\varepsilon_k(N)|=0.\]
The same asymptotic formula holds for $\pi_k(N)$.\end{lemma}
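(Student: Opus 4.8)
The plan is to follow the template of \cite[Lemma 3.4]{Loyd_dynamical_omega:2023}, substituting Wu's number-field version of the Sath\'e--Selberg theorem for its classical counterpart. The starting point is Wu's asymptotic formula, which for $k$ in the relevant range (in particular $1 \le k \le (2-\eta)\log_2 x$, which contains $I_{N,C}$ once $N$ is large) expresses the counting function in the shape
\[ N_k(x) = \rho_\K \frac{x}{\log x}\cdot\frac{(\log_2 x)^{k-1}}{(k-1)!}\left(\mathcal{G}\!\left(\tfrac{k-1}{\log_2 x}\right) + O\!\left(\tfrac{k}{(\log_2 x)^2}\right)\right), \]
where $\mathcal{G}$ is an analytic function, depending on $\K$, that is continuous near $1$ with $\mathcal{G}(1)=1$ (the analogous formula for $\pi_k(x)$ carries a different but structurally identical factor $\mathcal{F}$). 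Summing the displayed main term over $k$ recovers $\rho_\K x$ because $\sum_k (\log_2 x)^{k-1}/(k-1)! = \log x$; this matches \Cref{thm:weber_theorem} and is the sanity check guiding the normalization.

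After dividing by $|\{\mathfrak{a}\in G_\K : \mathcal{N}(\mathfrak{a}) \le N\}| = \rho_\K N + O(N^\delta)$ and using $1/\log N = e^{-\log_2 N}$, the quotient becomes
\[ \frac{N_k(N)}{|\{\mathfrak{a}\in G_\K : \mathcal{N}(\mathfrak{a})\le N\}|} = e^{-\log_2 N}\,\frac{(\log_2 N)^{k-1}}{(k-1)!}\,\mathcal{G}\!\left(\tfrac{k-1}{\log_2 N}\right)\bigl(1+o(1)\bigr), \]
where the error absorbs both Wu's term $O(k/(\log_2 N)^2)$ and the $O(N^{\delta-1})$ from Weber's theorem. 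Writing $\lambda = \log_2 N$, the product $e^{-\lambda}\lambda^{k-1}/(k-1)!$ is exactly the Poisson$(\lambda)$ weight at $k-1$, so the heart of the argument is a local central limit estimate: apply Stirling to $(k-1)!$ and set $k = \lambda + s\sqrt\lambda$ with $|s| \le C$, so that $s = (k-\log_2 N)/\sqrt{\log_2 N}$. Expanding $(k-1)\log(\lambda/(k-1))$ to second order, the $O(\sqrt\lambda)$ contributions cancel against $(k-1)-\lambda$, and after swallowing the off-by-one between the Poisson index $k-1$ and $k$ into the remainder, the exponent reduces to $-s^2/2 + O(\lambda^{-1/2})$; combined with $\sqrt{k-1} = \sqrt\lambda\,(1+o(1))$ this gives $e^{-\lambda}\lambda^{k-1}/(k-1)! = \frac{1}{\sqrt{2\pi\lambda}}e^{-s^2/2}(1+o(1))$. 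Since $\tfrac{s^2}{2} = \tfrac12\bigl(\tfrac{k-\log_2 N}{\sqrt{\log_2 N}}\bigr)^2$ is precisely the exponent in the statement and $\mathcal{G}\bigl(\tfrac{k-1}{\lambda}\bigr) \to \mathcal{G}(1)=1$ as $\tfrac{k-1}{\lambda} = 1 + O(\lambda^{-1/2}) \to 1$, collecting all factors produces the claimed expression with $\varepsilon_k(N)$ gathering the error terms.

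The main obstacle is \emph{uniformity} over the moving interval $I_{N,C}$ rather than the estimate at a fixed $k$. Every $O$- and $o$-term above---Wu's error, Stirling's remainder, the higher-order Taylor tails in the expansion of $(k-1)\log(\lambda/(k-1))$, the passage from $k-1$ to $k$ in the Gaussian exponent, and the continuity of $\mathcal{G}$ at $1$---must be shown to be $o(1)$ simultaneously for all $k$ with $|k-\lambda| \le C\sqrt\lambda$, including at the edges of $I_{N,C}$ where $|s|$ is largest. This is exactly where the hypothesis $k \in I_{N,C}$ enters: the bound $|s| \le C$ keeps the Taylor coefficients $s^j\lambda^{-j/2}$ uniformly controlled and guarantees $k-1 \ge \lambda - C\sqrt\lambda - 1 \to \infty$, so Stirling's relative error $O(1/(k-1))$ is uniformly $o(1)$. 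Taking $\sup_{k\in I_{N,C}}$ of the assembled error and letting $N\to\infty$ then yields $\lim_{N\to\infty}\sup_{k\in I_{N,C}}|\varepsilon_k(N)| = 0$. The identical computation applied to Wu's formula for $\pi_k(N)$, with $\mathcal{F}$ in place of $\mathcal{G}$, gives the statement for $\omega_\K$.
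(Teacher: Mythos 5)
Your proposal is correct and matches the paper's intended argument: the paper derives the lemma exactly as you do, by combining Wu's Sath\'e--Selberg formulas for $N_k(x)$ and $\pi_k(x)$ with \Cref{thm:weber_theorem} and Stirling's approximation, following \cite[Lemma 3.4]{Loyd_dynamical_omega:2023}. Your write-up simply fills in the local-central-limit computation and the uniformity over $I_{N,C}$ that the paper leaves implicit in that citation.
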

\subsection{Miscellaneous}
We will also employ a far-reaching generalization of the Erd\H{o}s--Kac theorem for polynomial functions. Note that the original result allows several polynomials with an arbitrary number of variables, but we will only state the case of a single polynomial with two variables.
\begin{corollary}\cite{ElBaz_loughran_sofos_normal:2022} 
\label{thm:general_erdos_kac_polynomials}
Let $g \in \mathbb{Z}[x_1,x_2]$ be a nonconstant polynomial and let $c$ denote the number of irreducible, primitive, non-constant polynomials $f$ such that $f \mid g$.

Let
\[
C_N = \left\{ (x,y) \in \mathbb{Z}^2 : \max \{|x|,|y|\}\leq N,\; g(x,y) \neq 0 \right\}
.\]

Then, for any $\gamma\in \R$
\[\lim_{N\to \infty} \frac{ \# \left\{(x,y)\in C_N: \frac{\omega(g(x,y)) - c \log_2N}
{\sqrt{c\log_2 N}}\leq \gamma\right\}}{\# C_N}=G(\gamma). \]
\end{corollary}
\begin{remark}
In the language of probability theory, this means that when $C_N$ is given the uniform (normalized) measure, the sequence of random variables
\[
C_N \longrightarrow \mathbb{R}, 
\qquad
(x,y) \longmapsto 
\left(
\frac{\omega(g(x,y)) - c \log_2 N}
{\sqrt{c \log_2 N}}
\right)
\]
converge in {\emph{distribution}} to a standard Gaussian. This probabilistic language will be useful in the sequel.

 \end{remark}
We will also borrow a result from probability theory, known as Slutsky's theorem, often used in probabilistic number theory.
\begin{theorem}[{Slutsky's theorem. (See, for example, \cite[Remark 1]{Billingsley:1969})}]\label{thm:slutsky}
Let $(X_N)_{N\in \N}$ and $(Y_N)_{N\in \N}$ be two sequences of random variables such that for every $N$, $X_N$ and $Y_N$ are defined on the same probability space. If $X_N$ converges in distribution to a random variable $G$ and $Y_N$ converges to zero in $L^1$, i.e., $\lim_{N\to \infty}\mathbb{E}[|Y_N|]=0$, then $X_N+Y_N$ also converges in distribution to $G$.
\end{theorem}
\subsection{Measure-preserving systems} We recall some basic notions of dynamical systems. A {\em topological dynamical system} (t.d.s.) is a pair $(X, T)$ with $X$ a compact metric space and $T\colon X \to X$ continuous.

A {\em measure-preserving dynamical system} (m.p.s.) is a quadruple $(X, \mathcal{X}, \mu, T)$, where $(X, \mathcal{X}, \mu)$ is a probability space, $T\colon X\to X$ is measurable and $\mu$ is $T$-invariant (that is, $\mu(T^{-1}(A)) = \mu(A)$ for all $A \in \mathcal{X}$). If $T \colon X \to X $ is an invertible transformation and its inverse is measurable, we say the system is {\em invertible}. A m.p.s. is called {\em ergodic} if for a set $A\in \mathcal{X}$, $A=T^{-1}A$ implies $\mu(A)\in \{0,1\}$.
In all that follows, we may (and will) assume that $X$ is a compact metric space, $\mathcal{X}$ is its Borel $\sigma$-algebra, and $\mu$ is a Borel probability measure. By the Krylov--Bogolyubov theorem \cite{Kryloff_Bogoliouboff_theorie_measure_systeme_dyn:1937}, every t.d.s. has at least one invariant measure. If this measure is unique, the system is called {\em uniquely ergodic}. If for every $k\in \N$ the t.d.s. $(X,T^k)$ is uniquely ergodic, the system is called {\em totally uniquely ergodic}.
\subsection{The strong sweeping-out property}\label{subsection:strong_sweeping}
In order to address questions about ergodic averages that are not necessarily of the form $\frac{1}{N}\sum_{n=1}^{N}f(T^{a_n}x)$ (where $(a_n)_{n\in \N}$ is a sequence of integers), it is helpful to introduce a new definition.
\begin{definition}
Let $(\Phi_N)_{N\in \N}$ be a sequence of nonempty finite sets, and let $\tau:\bigcup_{N\in \N}\Phi_N\to\N\cup\{0\}$ be a function. Then, the pair $((\Phi_N)_{N\in \N},\tau)$ is referred to as an averaging scheme.
\end{definition}
Given an averaging scheme, we can consider the corresponding ergodic averages
\begin{equation} \label{eq:average_general}
\frac{1}{\#\Phi_N}\sum_{n\in \Phi_N} f(T^{\tau(n)}x), \quad N\in\mathbb{N}.
\end{equation}
By taking $\Phi_N = \{1, \ldots, N\}$ and $\tau(n)=n$, we recover the classical ergodic averages. Moreover, by setting 
$\Phi_N = \{\mathfrak{a}\in G_\mathbb{K} : \mathcal{N}(\mathfrak{a}) \le N\}$ 
and letting $\tau(\mathfrak{a}) = \Omega_{\mathbb{K}}(\mathfrak{a})$ or 
$\tau(\mathfrak{a}) = \omega_{\mathbb{K}}(\mathfrak{a})$, 
we recover the averages presented in \Cref{section:introduction}.
This definition will be helpful for generalizing classical definitions and results in ergodic theory to specific averages that appear unnatural when indexed by the natural numbers, but arise naturally from a different underlying structure, such as a number field.

We can ask whether the ergodic averages along an averaging scheme are well-behaved or ill-behaved. For instance, we can ask whether the analog of Birkhoff's theorem holds along the averaging scheme. Strong sweeping-out-behavior corresponds to the case in which the ergodic averages oscillate in the most extreme way for a characteristic function.
\begin{definition}
We say that an averaging scheme $((\Phi_N)_{N\in \N},\tau)$ satisfies the strong sweeping-out property, if for every invertible, ergodic and non-atomic m.p.s. $(X, \mathcal{X},\mu, T)$, we have that for every $\varepsilon>0$, there is a set $E$, such that $\mu(E)<\varepsilon$, and for $\mu$-a.e. $x\in X$
\[\limsup_{N\to \infty} \frac{1}{\#\Phi_N} \sum_{k\in \Phi_N}\one_{E}(T^{\tau(k)}x)=1,\text{ and} \qquad \liminf_{N\to \infty} \frac{1}{\#\Phi_N} \sum_{k\in \Phi_N}\one_{E}(T^{\tau(k)}x)=0. \]
\end{definition}

The strong sweeping-out property has been the subject of research in several papers. The main criterion for proving that an averaging operator, such as the ones in \eqref{eq:average_general}, satisfies this property is the one introduced by del Junco and Rosenblatt in \cite{delJunco_Rosenblatt_counterex_erg_numb_th:1979}. 
\begin{proposition}\cite[Theorem 1.3]{delJunco_Rosenblatt_counterex_erg_numb_th:1979} Let $(X, \mathcal{X}, \mu)$ be a probability space. Let $(T_N)_{N\in \N}$ be a family of operators $T_N:\mathcal{X}\to L^{1}(\mu)$ satisfying mild conditions\footnote{These conditions are: being 1 on the whole space, monotone, linear, and continuous in measure. These conditions are immediately satisfied for the operators we consider. We refer to \cite{delJunco_Rosenblatt_counterex_erg_numb_th:1979} for the precise criteria.} and the following maximal inequality, for every $\varepsilon>0$ and $M_1\in \N$, there is a set $A\in \mathcal{X}$, such that $\mu(A)<\varepsilon$, and 
\begin{equation} \label{eq:del_junco_maximal}
\mu( \{x\in X: \sup_{N\geq M_1 }T_N(A)(x)\geq 1-\varepsilon\})\geq 1-\varepsilon .\end{equation}
Then, the family of operators satisfies the strong sweeping-out property, that is, for every $\varepsilon>0$, there is a set $B\in \mathcal{X}$, such that $\mu(B)<\varepsilon$ and for $\mu$-a.e. $x\in X$ \[\limsup_{N\to \infty}T_N(B)(x)=1\text{, and } \liminf_{N\to \infty}T_N(B)(x)=0.\] 
\end{proposition}
As in positive pointwise convergence results, what typically underlies this is the boundedness of a maximal operator. In the case of quantitative failure of pointwise convergence, such as the strong sweeping-out property, what's behind it is the extreme failure of that boundedness as captured by the maximal inequality \eqref{eq:del_junco_maximal}.

However, verifying the maximal inequality \eqref{eq:del_junco_maximal} directly is not always easy to do. To prove that the maximal inequality holds in our cases of interest, we rely on the following result, which has already been stated in the literature (see, e.g., \cite[Lemma 2.2]{MondalRoyWierdl2023} and \cite[Theorem 2.3]{Akcoglu_et_al:1996}), but we state it in our language. This result tells us that we only need to verify the maximal inequality on a finite interval of the integers, and then we can transfer it to any system in a very large class.
\begin{proposition} \label{thm:transference_principle}
Let $((\Phi_N)_{N\in \N},\tau)$ be an averaging scheme. It induces a function defined on finitely supported functions $f:\Z\to \R$, given by 
\[A_N(f)(x)=\frac{1}{\#\Phi_N} \sum_{k\in \Phi_N}f(x+\tau(k)).\]
Assume it satisfies the following:
Given any $\varepsilon>0$, $M>0$, and $N_0 \in \N$, there is an integer $N_1\geq N_0$, and a finite subset $E\subseteq \Z$ such that
\[ \#\{x\in \Z: \sup_{N_0\leq N\leq N_1}A_N(\one_{E})(x)\geq 1-\varepsilon\} \geq M \# E.\]
Then, the averaging scheme $( (\Phi_N)_{N\in \N},\tau)$ satisfies the strong sweeping-out property.
\end{proposition}
\section{Failure of pointwise convergence}\label{section:failure_of_pointwise}
In this section, we provide a criterion to establish strong sweeping-out for averages. Using this criterion, we extend \Cref{thm:Loyd}, give a positive answer to \Cref{question:question_5.2}, and present many more examples of averages that exhibit sweeping-out behavior. 

Loyd \cite{Loyd_dynamical_omega:2023} proves \Cref{thm:Loyd} in two main steps. In the first step, the author shows that the averages can be approximated by another family of averages, denoted $T_{N,C}$, for which classical tools can be employed to show the failure of pointwise convergence, by employing directly \Cref{eq:del_junco_maximal}. Once this approximation is established, she proves that the strong sweeping-out behavior of $T_{N,C}$ can be transferred to the average of interest.

In the approximation argument, two number-theoretical results are used, one is the Hardy-Ramanujan theorem (\Cref{thm:hardy_ramanujan}), which follows from the Erd\H{o}s--Kac theorem (\Cref{thm:erdos_kac}), and establishes mean properties of the averages along $(\Omega(n))_{n\in \N}$. The second theorem is the Sath\'e--Selberg theorem (see \Cref{lemma:approximation_level_sets}), which gives fine asymptotics for $\frac{\pi_k(N)}{N}$. Note that Sath\'e--Selberg implies the Erd\H{o}s--Kac theorem (see, e.g., \cite[Exercise 218]{Tenenbaum:2015}). Proving the Sath\'e--Selberg theorem requires knowing analytic properties of the Dirichlet series of $\Omega(n)$, while the Erd\H{o}s--Kac theorem requires less machinery. Recently, Loyd and Mondal showed that the Sath\'e--Selberg theorem is not necessary to establish the failure of pointwise convergence along $(\Omega(n))_{n\in \N}$, and that the Erd\H{o}s--Kac theorem is sufficient for this purpose. This is achieved by means of the following result.

\begin{corollary}\label{cor:cor_1.7_loyd_mondal:2025}\cite[Corollary 1.7]{loyd_mondal_omega:2025} Let $(a(n))_{n\in \N}$ be a sequence of integers. Suppose that for all $\varepsilon>0$, there exist increasing subpolynomial functions $(b(n))_{n\in \N}$ and $(p(n))_{n\in \N}$
 satisfying $p(n)=o(b(n))$
 and
 \[ \limsup_{N\to \infty}\frac{\#\left\{1\leq n\leq N: |a(n)-b(n)|>p(n)\right\}}{N}\leq \varepsilon.\]
 Then, the averages $T_Nf(x)=\frac{1}{N}\sum_{n=1}^{N}f(T^{a(n)}x)$ satisfy the strong sweeping-out property.
\end{corollary}
\begin{remark}
This applies to $a(n)=\Omega(n)$ by choosing $b(n)=\log_2(n)$ and $p(n)=C\sqrt{\log_2(n)}$. These are subpolynomial sequences that satisfy the hypothesis of \Cref{cor:cor_1.7_loyd_mondal:2025} in virtue of the Hardy-Ramanujan theorem.
\end{remark}
This suggests that, to establish the strong sweeping-out for more general averages, it suffices to show that they satisfy a Hardy-Ramanujan-type theorem. To show this, it suffices to show they satisfy an Erd\H{o}s--Kac type theorem. We define this property as follows.
\begin{definition} An averaging scheme $((\Phi_N)_{N\in \N},\tau)$ satisfies an Erd\H{o}s--Kac type theorem if there exists a constant $c>0$ such that for every $\gamma\in \R$, we have
\[\lim_{N\to \infty}\frac{\# \left\{k\in \Phi_N: \frac{\tau(k)-c\log_2(N)}{\sqrt{c\log_2(N)}}\leq\gamma\right \}}{\#\Phi_N}=G(\gamma),\]
where $G$ is the standard Gaussian distribution.
\end{definition}
\begin{remark}
If we equip $\Phi_N$ with the uniform probability measure, then the expression above means that the sequence of random variables for $N\in \N$,
\begin{align*} 
X_N:\Phi_N\to &\R\\
k\to &X_N(k)=\frac{\tau(k)-c\log_2(N)}{\sqrt{c\log_2(N)}}
\end{align*} converges to a standard Gaussian \textit{in distribution}. 
\end{remark}
The following theorem generalizes \Cref{cor:cor_1.7_loyd_mondal:2025} of Loyd and Mondal to arbitrary averaging schemes satisfying an Erd\H{o}s--Kac type theorem.
\begin{theorem}\label{thm:general_failure_of_pointwise} If an averaging scheme $((\Phi_N)_{N\in \N},\tau) $ satisfies an Erd\H{o}s--Kac type theorem, then it satisfies the strong sweeping-out property.
\end{theorem}
 Note that we decided to focus on an Erd\H{o}s--Kac type condition rather than more general subpolynomial sequences, since this case is well-studied and sufficient for our purposes. We will illustrate many examples satisfying the conditions of \Cref{thm:general_failure_of_pointwise}, but postpone its proof to the last part of the section.
 
First, in the case of number fields, Liu's result (\Cref{thm:erdos_kac}) implies that both $\omega_{\K}$ and $\Omega_\K$ functions exhibit an Erd\H{o}s--Kac-type growth.
\begin{proof}[Proof of \Cref{thm:pointwise_failure}]
Let $\K$ be a number field. For $N\in \N$, set $\Phi_N=\{\a\in G_\K:\mathcal{N}(\a)\leq N\}$, and $\tau=\Omega_{\K}$ or $\tau=\omega_\K$. The conclusion follows directly by applying \Cref{thm:general_failure_of_pointwise}.
\end{proof}
More generally, in the same paper where these results were established, Liu gives criteria for an abelian monoid to satisfy an Erd\H{o}s--Kac type theorem.
In a related work, Das, Kuo, and Liu \cite{Das_Kuo_Liu_distribution_generators_monoids:2026,Das_Kuo_Liu_erdos_kac_subsets_number_fields:2025,Das_Kuo_Liu_distribution_generalization_erdos_kac:2026} showed that $h$-free and $h$-full ideals of number fields also satisfy an Erd\H{o}s--Kac theorem for both $\omega_\K$ and $\Omega_\K$ functions. Recall that a non-zero ideal $\a$ can be written as $\a=\prod_{i=1}^{\infty}\p_i^{\alpha_i}=\prod_{i=1}^{n}\p_i^{\alpha_i}$, where $\alpha_i>0$ for $i\in \{1,\ldots, n\}$ and $\alpha_i=0$ for all $i >n$. Let $h\in \N$, $h\geq 2$, we say that an ideal $\a$ is $h$-free if $\alpha_i\leq h-1$ for all $i\in \{1, \ldots, n\}$, and say that it is $h$-full, if $\alpha_i\geq h$ for $i\in \{1, \ldots, n\}$. 
These works also extend the results to abelian monoids satisfying certain axioms. We state only one representative application corresponding to the case of $\Omega_\K$ along $h$-full ideals.
\begin{corollary} \label{cor:failure_full} Let $(X, \mathcal{X}, \mu, T)$ be an invertible ergodic m.p.s., where $\mu$ is non-atomic. Let $\K$ be a number field, let $\Phi_N=\{\a \in G_\K:\a \text{ is $h$-full, and } \mathcal{N}(\a)\leq N\}$, then for every $\varepsilon>0$, there is a set $A\in \mathcal{X}$, satisfying $\mu(A)<\varepsilon$ and for $\mu$-a.e. $x\in X$
\[\limsup_{N\to \infty}\frac{1}{\#\Phi_N} \sum_{\substack{\a \text{ is $h$-full} \\ \mathcal{N}(\a)\leq N}} \one_{A}(T^{\Omega_{\K}(\a)}x)=1,\]
 and 
 \[\liminf_{N\to \infty}\frac{1}{\#\Phi_N} \sum_{\substack{\a \text{ is $h$-full} \\ \mathcal{N}(\a)\leq N}} \one_{A}(T^{\Omega_{\K}(\a)}x)=0.\]
\end{corollary}
In the specific case where $\K=\Q$, Li, Wang, Wang, and Yi showed in \cite{LiWangWangYi:2025}, among other things, that for totally uniquely ergodic systems, the following ergodic theorem holds.
\begin{theorem} \label{thm:liwangwangyi_k_full}\cite[Theorem 1.5] {LiWangWangYi:2025} Let $(X,T)$ be a totally uniquely ergodic system with invariant measure $\mu$. For $f\in C(X)$ and $x\in X$, we have 
\[\lim_{N\to \infty}\frac{1}{\#\{1\leq n\leq N : n \text{ is $h$-full}\}}\sum_{\substack{n=1 \\ n \text{ is $h$-full} }}^{N}f(T^{\Omega(n)}x)=\int_{X}fd\mu\]
\end{theorem}

Note that \Cref{cor:failure_full} and \Cref{thm:liwangwangyi_k_full} show that these types of averages also exhibit a heavy dependence on the assumptions of the system. For general ergodic systems and measurable functions, they satisfy the strong sweeping-out property, whereas they converge under the assumption of (totally) unique ergodicity for continuous functions.

Further examples of objects satisfying an Erd\H{o}s--Kac type, to which \Cref{thm:general_failure_of_pointwise} applies, can be found in \cite{Granville_Soundararajan_sieving_erdos_kac:2007} and \cite{MurtyMurtyPujahari:2023}.

We now focus on \Cref{question:question_5.2}. To provide an affirmative answer, it suffices to show that the averaging scheme $((\{1, \dots, N\}^2)_{N\in \N}, \tau)$, with $\tau(m,n) = \Omega(m^2+n^2)$, satisfies an Erd\H{o}s--Kac-type theorem.
In order to do this, we first apply \Cref{thm:general_erdos_kac_polynomials} to the polynomial $g(x,y)=x^2+y^2$. Note that this polynomial is irreducible and it has only one zero. It also has the symmetries $g(x,y)=g(-x,y)=g(x,-y)=g(-x,-y)$ (which implies that its images are the same in the four quadrants of $\Z^2$). Thus, we obtain: 

\begin{lemma} For $\gamma\in \R$, we have \label{cor:erdos_kac_small_omega}
\[ \lim_{N \to \infty} \frac{\# \left\{ 1 \leq m, n \leq N : \frac{\omega(m^2 + n^2) - \log_2(N)}{\sqrt{\log_2(N)}} \leq \gamma \right\}}{N^2} = G(\gamma). 
\]\end{lemma}
It is common, but not always true, that when one asymptotic result holds for $\omega(\cdot)$, it also holds for $\Omega(\cdot)$ and vice versa. In our particular case, we can also show with some work that \Cref{cor:erdos_kac_small_omega} indeed holds with $\Omega(\cdot)$ in place of $\omega(\cdot)$. 
From now on, for a Gaussian integer $z \in \mathbb{Z}[i]$, we write $\mathcal{N}(z)$ for its (squared) Euclidean norm. This is consistent with the ideal norm, since for the principal ideal $(z)$ we have $
\mathcal{N}((z)) =z\overline{z}$.
\begin{lemma}
For $\gamma\in \R$, we have \label{cor:erdos_kac_big_omega}
\[ \lim_{N\to \infty} \frac{ \#\left\{1\leq m,n\leq N: \frac{\Omega(m^2+n^2)-\log_2(N)}{\sqrt{\log_2(N)}}\leq \gamma\right\} }{N^2}=G(\gamma). \]
\end{lemma}
\begin{proof} For $1\leq m,n \leq N$, we write
\[\frac{\Omega(m^2+n^2)-\log_2(N)}{\sqrt{\log_2(N)}}=\frac{\omega(m^2+n^2)-\log_2(N)}{\sqrt{\log_2(N)}}+\frac{\Omega(m^2+n^2)-\omega(m^2+n^2)}{\sqrt{\log_2(N)}}\]
In view of \Cref{thm:slutsky} and \Cref{cor:erdos_kac_small_omega}, to get the conclusion, it suffices to show that the random variables $X_N(m,n)=\frac{\Omega(m^2+n^2)-\omega(m^2+n^2)}{\sqrt{\log_2(N)}}$, defined on $\{1,\ldots,N\}^2$ with the uniform probability measure satisfy 
\begin{equation}
\lim_{N\to \infty} \mathbb{E}(|X_N|)=0.
\label{eq:expected_value}
\end{equation}
Thus, we have to study \[\mathbb{E}\left( \frac{\Omega(m^2+n^2)-\omega(m^2+n^2)}{\sqrt{\log_2(N)}}\right).\]
We can rewrite it as follows, where $p^k\parallel m^2+n^2$ means $p^k| m^2+n^2$ but $p^{k+1}\nmid m^2+n^2$.
\begin{equation}\label{eq:rewriting_expected}
\mathbb{E}\left(\frac{\Omega(m^2+n^2)-\omega(m^2+n^2)}{\sqrt{\log_2(N)}}\right)=\frac{1}{\sqrt{\log_2(N)} N^2} \sum_{p\in \P} \sum_{k\geq 2}k\sum_{\substack{1\le m,n\le N\\ p^k \parallel m^2+n^2}} 1\end{equation}
Let us denote the innermost sum as $C(p,k,N) := \sum_{\substack{1\le m,n\le N\\ p^k \parallel m^2+n^2}} 1$. $ C(p,k,N) = O(N^2 \frac{k}{p^{k}}) $, where the implied constant is independent of $p$, $k$, and $N$. Before proving the bound, we show that it is sufficient to establish our result. Indeed, by \eqref{eq:rewriting_expected}, we have 
\[\mathbb{E}\left(\frac{\Omega(m^2+n^2)-\omega(m^2+n^2)}{\sqrt{\log_2(N)}}\right)\leq \frac{1}{\sqrt{\log_2(N)}} O(\sum_{p \in \P} \sum_{k\geq 2 } \frac{k^2}{p^{k}}).\]
Thus, it suffices to show that $\sum_{p \in \P} \sum_{k\geq 2 } \frac{k^2}{p^{k}}$ is uniformly bounded to conclude \eqref{eq:expected_value}. Note that for a fixed prime $p\in \P$, the inner sum is bounded by \[\sum_{k\geq 2 } \frac{k^2}{p^{k}}\leq \frac{C}{p^2},\]
where $C$ is a uniform constant that holds for all primes. Thus, we have
\[\sum_{p \in \P} \sum_{k\geq 2 } \frac{k^2}{p^{k}}\leq\sum_{p\in \P}\frac{C}{p^2}\leq C\sum_{n\in \N,n\geq 2} \frac{1}{n^2}<\infty,\]
which yields the conclusion.

Now, we proceed with the proof of the claim. We interpret $m^2+n^2$ as the norm of the Gaussian integer $m+ni$. To continue, we analyze three distinct cases based on the prime factorization of $p \in \mathbb{P}$ as a Gaussian integer.
\begin{itemize}
 \item Case $p=2$, in this case $2=-i(1+i)^2$, where $(1+i)$ is a Gaussian prime, this is the ramified case.
 \item Case $p\equiv 1 \pmod{4}$, in this case $p=\pi \cdot \bar{\pi}$, where both $\pi$ and $\bar{\pi}$ are relatively prime Gaussian primes, this is the split case. \item Case $p\equiv 3 \pmod{4}$, in this case $p$ is a Gaussian prime, this is the inert case.
 \end{itemize}
 We prove the claim only in the split case, since the other cases are analogous, and the split case yields the dominant asymptotic term.

Let $p$ be a prime such that $p\equiv 1 \pmod{4}$. Then $p=\pi \cdot \bar{\pi}$, where $\pi$ and $\bar{\pi}$ are coprime.
We write the prime factorization of $ m + ni = \pi^{a} \bar{\pi}^{b} c $, where $c$ is coprime to both $\pi$ and $\bar{\pi} $. From this, we obtain $ m^{2} + n^{2} = p^{a+b} \mathcal{N}(c)$.
The condition $p^k\parallel m^2+n^2$ and the coprimality of $c$ with $\pi$ and $\bar{\pi}$, imply that $a+b=k$. Since $0\leq a, b\leq k$, we obtain $k+1$ cases. For each of these cases, as $m $ and $n $ range from $ 1 $ to $ N $ we find that for every $(m+ni)$ such that $ p^k \parallel m^2+n^2 $ there is a unique $c$, such that $m+ni=\pi^{a}\bar{\pi}^bc$ and $\mathcal{N}(c)\leq \frac{m^2+n^2}{p^k}\leq\frac{2N^2}{p^k}$. Therefore there are at most $\#D_N$ solutions for each case, where $ D_{N} = \{z\in \mathbb{Z}[i] : \mathcal{N}(z)\leq \frac{2N^2}{p^k} \} $. By combining all the cases, we obtain
\[C(p,k,N)\leq (k+1)\#D_{N}\leq 2k \#D_N.\]
Now, by Gauss's progress on the circle problem or alternatively by \Cref{thm:weber_theorem} in the case $\K=\Q[i]$, we have that $\#D_{N}=O(\frac{N^2}{p^k})$, proving the claim.
\end{proof}
Now, we can deduce the answer to \Cref{question:question_5.2} directly.
\begin{theorem}
Let $(X, \mathcal{X}, \mu, T)$ be non-atomic invertible ergodic m.p.s. Then for every $\varepsilon>0$, there is a measurable set $A$, satisfying $\mu(A)<\varepsilon$ and such that for $\mu$-a.e. $x\in X$
\[
\limsup_{N\to\infty}\frac{1}{N^2}\sum_{1\leq m, n \leq N} \one_{A}(T^{\Omega(m^2+n^2)}x)
= 1,
\]
and
\[
\liminf_{N\to\infty}\frac{1}{N^2}\sum_{1\leq m, n \leq N} \one_{A}(T^{\Omega(m^2+n^2)}x)
= 0.\]
\end{theorem}
\begin{proof}
For $N \in \mathbb{N}$, let $\Phi_N =\{1, \dots, N\}^2$. We define $\tau(m,n) = \Omega(m^2 + n^2)$. By \Cref{cor:erdos_kac_big_omega} the averaging scheme $((\Phi_N)_{N\in \N},\tau)$ satisfies the hypothesis of \Cref{thm:general_failure_of_pointwise}, and we can conclude.
\end{proof}
Finally, we present the proof of \Cref{thm:general_failure_of_pointwise}.
\begin{proof}[Proof of \Cref{thm:general_failure_of_pointwise}] Let $\varepsilon>0$.
Fix an averaging scheme $((\Phi_N)_{N\in\N}, \tau)$ that satisfies an Erd\H{o}s--Kac type theorem. Then, for every $\gamma>0$, we have
\[\lim_{N\to \infty} \frac{\#\{k\in \Phi_N:-\gamma\leq\frac{\tau(k)-c\log_2(N)}{\sqrt{c\log_2(N)}}\leq\gamma \}}{\#\Phi_N}=G(\gamma)-G(-\gamma).\]
Since $\lim_{\gamma\to \infty}G(\gamma)-G(-\gamma)=1$, there is a $\gamma_1$, which we can take as an integer, and a large $N_1$, such that for every $N\geq N_1$, we have
\begin{equation}
\frac{\#\{k\in \Phi_N:-\gamma_1\leq\frac{\tau(k)-c\log_2(N)}{\sqrt{c\log_2(N)}}\leq \gamma_1\}}{\#\Phi_N
} \geq 1-\varepsilon. \label{eq:hardy_ramanujan_averaging_scheme}
\end{equation}
Rewriting \eqref{eq:hardy_ramanujan_averaging_scheme}, we obtain
{\small
\begin{equation} \label{eq:hardy_ramanujan_averaging_scheme_small} \frac{\#\{k\in \Phi_N: c \log_2(N) -\gamma_1{\sqrt{c\log_2(N)}} \leq \tau(k)\leq c\log_2(N) +\gamma_1{\sqrt{c\log_2(N)}}\}}{\#\Phi_N
} \geq 1-\varepsilon.\end{equation}}
We want to apply \Cref{thm:transference_principle}. Thus, we have to show that for our fixed $\varepsilon>0$ and any given $M>0$, there is a finite set $E\subseteq \Z$ and a $N_2\geq N_0$, such that
\begin{equation}\label{eq:maximal_ineq_proof}
\#\{x\in \Z: \sup_{N_0\leq N\leq N_2}\frac{1}{\#\Phi_N} \sum_{k\in \Phi_N}\one_{E}(x+\tau(k))\geq 1-\varepsilon\} \geq M \# E.
\end{equation}
Now, we follow closely the proof of \cite[Corollary 1.6]{Mondal2023}. 
We choose a large integer $K$, such that 
\[\lfloor{e^{e^{\frac{K^2}{c}}}}\rfloor\geq \max \{N_1,N_0\}.\]
Let us take $E=[-8\gamma_1 K, 8\gamma_1K]\cap\Z$. For $l \in \{0,...,K\}$, denote $M_l=\lfloor e^{e^{\frac{(K+l)^2}{c}}}\rfloor$. Let $x\in [-(2K)^2-2\gamma_1K, -K^2+2\gamma_1K]\cap\Z$. Then there exists an $l\in \{0,...,K\}$, such that $x\in [-(K+l)^2-2\gamma_1K, -(K+l)^2 +2\gamma_1K]\cap \Z$. Applying \Cref{eq:hardy_ramanujan_averaging_scheme_small} with $N=M_l$, we obtain
\[\frac{\#\{k\in\Phi_{M_l}:x+\tau(k) \in E\}}{\#\Phi_{M_l}} \geq 1-\varepsilon,\]
which is equivalent to
\[\frac{1}{\#\Phi_{M_l}}\sum_{k\in \Phi_{M_l}}\one_E(x+\tau(k))\geq 1-\varepsilon .\]
In other words, we have shown that for $N_2=M_K$, the following holds
\begin{equation}\label{ineq:proof_general}
\begin{aligned}
\#\Big\{x\in \Z :\;& 
\sup_{N_0\leq N\leq N_2}\frac{1}{\#\Phi_N}
\sum_{k\in \Phi_N}\one_{E}(x+\tau(k))
\geq 1-\varepsilon \Big\} \\
&\geq \#\big([-(2K)^2-2\gamma_1K,\,-K^2+2\gamma_1K]\cap\Z\big).
\end{aligned}
\end{equation}
Note that we can rewrite the right-hand side of \eqref{ineq:proof_general} as 
 \[(3K^2+4\gamma_1K+1)=(16K\gamma_1+1)g(K)=\#Eg(K),\]
 where $\lim_{K\to \infty}g(K)=\infty$. That is, if we choose a large $K$ such that $g(K)\geq M$ and that satisfies the previously imposed conditions, then \eqref{eq:maximal_ineq_proof} holds.
\end{proof}
\section{Convergence in uniquely ergodic systems and applications}\label{section:applications}
\subsection{Convergence in the topological setting}
To address pointwise convergence in uniquely ergodic systems as mentioned in \Cref{thm:dynamical_omega_number_fields}, we adopt an approach outlined in \cite[Remark 1.3]{Bergelson_Richter_dynamical_pnt:2022}, which was introduced to Bergelson and Richter by Kanigowski and Radzwi\l{}\l{} for the particular case where $\K=\Q$ and $c=\Omega_{\Q}$. It consists of proving the asymptotic shift-invariance of an average using the statistical properties of $\omega_{\K}$ and $\Omega_{\K}$ described in \Cref{section:background}, from which the pointwise result follows immediately.
\begin{theorem}\label{thm:teo_arimetico_fields}
Let $\mathbb{K} $ be a number field. If $ a: \mathbb{N}\cup\{0\} \to \mathbb{C} $ is a bounded sequence, then for either $c(\a)=\omega_{\K}(\a)$ or $c(\a)=\Omega_\K(\a)$, we have 
\begin{align*}
&\frac{1}{
\#\{ \mathfrak{a} \in G_{\mathbb{K}} : \mathcal{N}(\mathfrak{a}) \leq N \}}
\sum_{\substack{\mathfrak{a} \in G_{\mathbb{K}} \\ \mathcal{N}(\mathfrak{a}) \leq N}} 
a\big(c(\mathfrak{a}) + 1\big) \\
&= \frac{1}{\#\{ \mathfrak{a} \in G_{\mathbb{K}} : \mathcal{N}(\mathfrak{a}) \leq N \}}
\sum_{\substack{\mathfrak{a} \in G_{\mathbb{K}} \\ \mathcal{N}(\mathfrak{a}) \leq N}} 
a\big(c(\mathfrak{a})\big) + o(1)_{N \to \infty}.
\end{align*}
\end{theorem}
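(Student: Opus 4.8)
The plan is to recast both averages as sums weighted by the proportions $w_k(N) := N_k(N)/M(N)$, where $M(N) := |\{\mathfrak{a} \in G_\K : \mathcal{N}(\mathfrak{a}) \leq N\}|$ (with $\pi_k$ replacing $N_k$ in the $\omega_\K$ case), thereby reducing the claimed shift-invariance to a single total-variation estimate on these weights. First I would group the ideals by the value of $c$. Since $\sum_k w_k(N) = 1$ and every ideal lies in exactly one level set, the left-hand average equals $\sum_{k \geq 0} a(k+1)\, w_k(N)$, and reindexing $j = k+1$ turns this into $\sum_{j \geq 1} a(j)\, w_{j-1}(N)$. Subtracting the right-hand average $\sum_{k \geq 0} a(k)\, w_k(N)$ produces the telescoped expression $\sum_{k \geq 0} a(k)\,\big(w_{k-1}(N) - w_k(N)\big)$, with the convention $w_{-1}(N) := 0$. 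As $a$ is bounded, say $|a| \leq B$, the modulus of this quantity is at most $B \sum_{k \geq 0} |w_{k-1}(N) - w_k(N)|$, so it suffices to prove that the total variation $V(N) := \sum_{k} |w_{k-1}(N) - w_k(N)|$ tends to $0$.

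To estimate $V(N)$, I would fix $\varepsilon > 0$, choose $C = C(\varepsilon)$ large enough that \Cref{thm:hardy_ramanujan} applies, and split the sum according to whether a consecutive pair $\{k-1, k\}$ lies entirely inside $I_{N,C}$ or meets its complement. For pairs inside $I_{N,C}$, \Cref{lemma:approximation_level_sets} gives $w_k(N) = g_k(N)\,(1 + \varepsilon_k(N))$, where $g_k(N) := (2\pi \log_2 N)^{-1/2}\, e^{-\frac{1}{2}((k - \log_2 N)/\sqrt{\log_2 N})^2}$ and $\sup_{k \in I_{N,C}} |\varepsilon_k(N)| \to 0$. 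The contribution of the main Gaussian term is controlled by the total variation of the unimodal sequence $(g_k(N))_k$, which telescopes to roughly twice its peak value $(2\pi \log_2 N)^{-1/2} = O((\log_2 N)^{-1/2})$; the contribution of the error terms is bounded by $2\,\sup_{k \in I_{N,C}} |\varepsilon_k(N)| \cdot \sum_k g_k(N)$, and since $\sum_k g_k(N) = 1 + o(1)$ (a Riemann sum for the Gaussian integral over a window of width $\sqrt{\log_2 N} \gg 1$), this too tends to $0$. For pairs meeting the complement of $I_{N,C}$, I would bound $|w_{k-1}(N) - w_k(N)|$ crudely by $w_{k-1}(N) + w_k(N)$; summing, these are dominated by twice the tail mass $\sum_{k \notin I_{N,C}} w_k(N) \leq \varepsilon$ together with finitely many boundary terms of size $O((\log_2 N)^{-1/2})$.

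Combining the two pieces yields $\limsup_{N \to \infty} V(N) \leq 2\varepsilon$, and since $\varepsilon > 0$ is arbitrary we conclude $V(N) \to 0$, which gives the theorem for $c = \Omega_\K$. The identical argument with $\pi_k$ in place of $N_k$ (using the fact, noted in \Cref{lemma:approximation_level_sets}, that the same asymptotic formula holds for $\pi_k$) settles the $c = \omega_\K$ case.

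The crux of the argument is the total-variation estimate for the Gaussian main term: the governing phenomenon is that the distribution of $c(\mathfrak{a})$ is spread over a window of width $\sim \sqrt{\log_2 N}$, so a unit shift in $k$ perturbs the weights only by $O((\log_2 N)^{-1/2})$. Making this precise requires the \emph{uniform} Sathé--Selberg asymptotics of \Cref{lemma:approximation_level_sets} rather than merely pointwise ones, because the error terms $\varepsilon_k(N)$ must be summed against weights whose total mass is close to $1$; the uniform control $\sup_{k \in I_{N,C}} |\varepsilon_k(N)| \to 0$ is exactly what prevents these accumulated errors from dominating. The truncation to $I_{N,C}$, justified by \Cref{thm:hardy_ramanujan}, is what makes the uniform bound available in the first place, so the delicate point is the interplay between the Hardy--Ramanujan tail bound and the uniform Sathé--Selberg approximation.
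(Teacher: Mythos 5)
Your proposal is correct, and its skeleton coincides with the paper's proof: group the ideals by the value of $c$, reduce the shift-invariance to the total-variation estimate $\sum_{k}|w_N(k+1)-w_N(k)|\to 0$ for the weights $w_N(k)=N_k(N)/|\{\mathfrak{a}\in G_\K:\mathcal{N}(\mathfrak{a})\le N\}|$, truncate to the Hardy--Ramanujan window $I_{N,C}$ at a cost of $O(\varepsilon)$ via \Cref{thm:hardy_ramanujan}, and replace $w_N(k)$ on that window by the Gaussian weights $g_k(N)$ using the uniform error control of \Cref{lemma:approximation_level_sets}. The one step where you genuinely diverge is the estimate of the Gaussian main term: the paper expands $(k+1-\log_2(N))^2=(k-\log_2(N))^2+2(k-\log_2(N))+1$ and shows that the resulting factor $\bigl|e^{\frac{\log_2(N)-k}{\log_2(N)}-\frac{1}{2\log_2(N)}}-1\bigr|$ is uniformly $o(1)_{N\to\infty}$ on $I_{N,C}$, then sums it against the Gaussian weights; you instead note that $(g_k(N))_k$ is unimodal in $k$, so its total variation telescopes to at most twice its peak value, $2(2\pi\log_2(N))^{-1/2}\to 0$. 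Both arguments are sound, and yours is arguably cleaner and more robust, since it uses nothing about the weights beyond unimodality and decay of the maximum, whereas the paper's computation exploits the explicit exponential form; your crude bound $|w_{k-1}-w_k|\le w_{k-1}+w_k$ off the window, plus the $O((\log_2 N)^{-1/2})$ boundary pairs, matches the paper's $O(\varepsilon)$ bookkeeping. One cosmetic imprecision: $\sum_{k\in I_{N,C}}g_k(N)$ tends to $G(C)-G(-C)<1$ rather than to $1$, but since all you need is that this sum is $O(1)$, nothing in your argument is affected.
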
 
\begin{remark}
In the case of $\Omega_{\K}$, this result was proven without appealing to any form of the prime ideal theorem by Burgin in \cite{Burgin_new_elementary_landau:2025}.
\end{remark}
\begin{proof}
We provide the proof only for $ c = \Omega_\K$, as the other case is identical.
For $N \in \mathbb{N}$ and $k \in \mathbb{N}$, recall that $N_k(N) = \#\{ \mathfrak{a} \in G_{\mathbb{K}} : \mathcal{N}(\mathfrak{a}) \le N, \; \Omega_{\mathbb{K}}(\mathfrak{a}) = k \}$,
 and define $w_N(k) := \frac{N_k(N)}{\# \{ \mathfrak{a} \in G_{\mathbb{K}} : \mathcal{N}(\mathfrak{a}) \le N \}}$.
We may write
\[
\frac{1}{\#\{ \mathfrak{a} \in G_{\mathbb{K}}: \mathcal{N}(\mathfrak{a}) \leq N \}}\sum_{\substack{\mathfrak{a} \in G_\K \\ \mathcal{N}(\mathfrak{a}) \le N}}a(\Omega_{\K}(\mathfrak{a})) = \sum_{k \in \mathbb{N}} w_N(k+1) a(k+1)+o(1)_{N\to \infty},
\]
and
\[
\frac{1}{\#\{ \mathfrak{a} \in G_{\mathbb{K}}: \mathcal{N}(\mathfrak{a}) \leq N \}}\sum_{\substack{\mathfrak{a} \in G_\K \\ \mathcal{N}(\mathfrak{a}) \le N}}a(\Omega_{\K}(\mathfrak{a}) + 1) = \sum_{k \in \mathbb{N}} w_N(k) a(k+1)+o(1)_{N\to \infty}.
\]
Thus, it suffices to show that 
 \[ \lim_{N\to \infty}\sum_{k \in \N} |a(k)| \cdot|w_{N}(k+1) -w_{N}(k)|=0 . \]
Since $a$ is bounded, it suffices to prove
\begin{equation}
\lim_{N\to \infty} \sum_{k \in \mathbb{N}} |w_{N}(k+1) - w_{N}(k)|=0.
\label{equation:serie_weighted}
\end{equation}
To this end, let $\varepsilon>0$. By the Hardy-Ramanujan theorem (\Cref{thm:hardy_ramanujan}), there exists a sufficiently large $C>0$ such that most of the weight of the $w_{N}(k)$ functions is supported on an interval of the form $ k\in I_{N,C}$ (see \eqref{definition:interval_n_c}), that is
\[\lim_{N\to \infty}\sum_{k\not \in I_{N,C}} w_N(k)<\varepsilon.\]
Then
\[ \sum_{k \in \N} |w_{N}(k+1) -w_{N}(k)|= \sum_{k \in I_{N,C}} |w_{N}(k+1) -w_{N}(k)|+o(1)_{N\to \infty}
 +O(\varepsilon).\]
Now, using \Cref{lemma:approximation_level_sets}, we have
\[w_N(k)=\frac{e^{-\frac{1}{2}(\frac{k-\log_2(N)}{\sqrt{\log_2(N)}})^2}}{\sqrt{2\pi \log_2(N)}}(1+\varepsilon_{k}(N)),\]
where the error terms satisfy $\lim_{N\to \infty} \sup_{k\in I_{N,C}} |\varepsilon_{k}(N)|=0$. By summing the error terms and using the fact that they tend to zero uniformly in $N$, we obtain that \eqref{equation:serie_weighted} equals
\[\sum_{k \in I_{N,C}} \Bigl|\frac{e^{-\frac{1}{2}(\frac{k+1-\log_2(N)}{\sqrt{\log_2(N)}})^2}}{\sqrt{2\pi \log_2(N)}} -\frac{e^{-\frac{1}{2}(\frac{k-\log_2(N)}{\sqrt{\log_2(N)}})^2}}{\sqrt{2\pi \log_2(N)}}\Bigr| +o(1)_{N\to \infty}+O(\varepsilon ).\]
Using the basic equality $(k + 1 - \log_2(N))^2 = (k - \log_2(N))^2 + 2(k - \log_2(N)) + 1,$
we can rewrite \eqref{equation:serie_weighted} as 
\[ \sum_{k \in I_{N,C}} \frac{e^{-\frac{1}{2}(\frac{k-\log_2(N)}{\sqrt{\log_2(N)}})^2}}{\sqrt{2\pi \log_2(N)}}|e^{\frac{\log_2(N)-k}{\log_2(N)}- \frac{1}{2\log_2(N)}} -1| +O(\varepsilon )+o(1)_{N\to \infty}
\]

Since for $k\in I_{N,C}$ we have $-C\log_2(N)^{-\frac{1}{2}}\leq \frac{\log_2(N)-k}{\log_2(N)}\leq C\log_2(N)^{-\frac{1}{2}},$
we obtain 
\[\sum_{k \in I_{N,C}} \frac{e^{-\frac{1}{2}(\frac{k-\log_2(N)}{\sqrt{\log_2(N)}})^2}}{\sqrt{2\pi \log_2(N)}}|e^{\frac{\log_2(N)-k}{\log_2(N)}- \frac{1}{2\log_2(N)}} -1|=o(1)_{N\to\infty}\cdot\sum_{k \in I_{N,C}}\frac{e^{-\frac{1}{2}(\frac{k-\log_2(N)}{\sqrt{\log_2(N)}})^2}}{\sqrt{2\pi \log_2(N)}}.\]
Given that
\[ \sum_{k \in I_{N,C}}\frac{e^{-\frac{1}{2}(\frac{k-\log_2(N)}{\sqrt{\log_2(N)}})^2}}{\sqrt{2\pi \log_2(N)}}=O(1),\]
we conclude that 
\[ \sum_{k \in I_{N,C}} \frac{e^{-\frac{1}{2}(\frac{k-\log_2(N)}{\sqrt{\log_2(N)}})^2}}{\sqrt{2\pi \log_2(N)}}|e^{\frac{\log_2(N)-k}{\log_2(N)}- \frac{1}{2\log_2(N)}} -1|=o(1)_{N\to \infty}.\]
Collecting all terms gives
\[\sum_{k \in \mathbb{N}} |w_{N}(k+1) - w_{N}(k)| = O(\varepsilon) + o(1)_{N\to \infty}.\]
Thus, we obtain
\begin{equation}
\limsup_{N \to \infty} \sum_{k \in \mathbb{N}} |w_{N}(k+1) - w_{N}(k)| = O(\varepsilon) \label{eq:weights_less_epsilon}.
\end{equation}
Since this holds for any $\varepsilon>0$, we get \eqref{equation:serie_weighted}, finishing the proof.
\end{proof}

\begin{remark}
Note that we may generalize \cref{thm:teo_arimetico_fields} to include sequences $c(\a)$ that exhibit the distribution described in \cref{lemma:approximation_level_sets}. Such sequences satisfy the Hardy–Ramanujan property (see the discussion at the beginning of \cref{section:failure_of_pointwise}). We currently do not know whether satisfying both the Erd\H{o}s–Kac and Hardy–Ramanujan properties is sufficient for \cref{thm:teo_arimetico_fields} to hold. 
\end{remark}

The proof of \Cref{thm:dynamical_omega_number_fields} follows from \Cref{thm:teo_arimetico_fields} by nowadays standard arguments. For completeness, we provide them below.
\begin{proof}[Proof of \Cref{thm:dynamical_omega_number_fields}] For a fixed $x\in X$, consider the empirical measures \[\mu_N=\frac{1}{\#\{\mathfrak{a}\in G_\K: \mathcal{N}(\mathfrak{a})\leq N\}}\sum_{\substack{\mathfrak{a} \in G_\K \\ \mathcal{N}(\mathfrak{a}) \le N}}\delta_{T^{c(\mathfrak{a})}x},\quad N\in \N\] where $c=\omega_\K$ or $\Omega_\K$. Our goal is to show that $\mu_N$ converges to $\mu$ in the weak-* topology. Due to unique ergodicity, it suffices to show that every accumulation point of $\mu_N$ is $T$-invariant. This amounts to showing that for any $f\in C(X)$, 
 \[\lim_{N\to\infty}
\frac{1}{\#\{\mathfrak{a}\in G_{\K} : \mathcal{N}(\mathfrak{a})\le N\}}
\sum_{\substack{\mathfrak{a}\in G_{\K}\\ \mathcal{N}(\mathfrak{a})\le N}}
\bigl(f(T^{c(\mathfrak{a})}x)-f(T^{c(\mathfrak{a})+1}x)\bigr)
=0.\]
This claim follows directly from \Cref{thm:teo_arimetico_fields} by taking $a_n=f(T^nx)$.
\end{proof}
\subsection{Norm convergence}
To deduce norm convergence, we use a folklore transference principle that allows us to pass from results for rotations on the circle to general measure-preserving systems. More precisely, we will employ a generalization of the following well-known statement.
\begin{proposition}\label{proposition:folklore_averages}
Let $(a_n)_{n \in \mathbb{N}}$ be a sequence of nonnegative integers such that, for every $\lambda \in \mathbb{S}^1 \setminus \{1\}$
\[
\lim_{N \to \infty} \frac{1}{N} \sum_{n = 1}^{N} \lambda^{a_n} = 0.
\]
Then, for every m.p.s. $(X, \mathcal{X}, \mu, T)$ and every $f \in L^{2}(X, \mu)$
\[
\lim_{N \to \infty} 
\Bigg\lVert 
\frac{1}{N} \sum_{n = 1}^{N} f(T^{a_n}x) - P(f)
\Bigg\rVert_{L^2(X, \mu)} = 0,
\]
where $P(f)$ denotes the orthogonal projection onto 
\[
I(T) := \{ g \in L^{2}(X, \mu) : g \circ T = g \}.
\]
Recall that if $(X, \mathcal{X}, \mu, T)$ is ergodic, then $P(f) = \int f \, d\mu$.
\end{proposition}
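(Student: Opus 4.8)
The plan is to diagonalize the Koopman operator by the spectral theorem and thereby reduce the claim to the scalar statement on the circle, which is exactly the hypothesis. Write $U=U_T$ for the Koopman operator $Uf=f\circ T$ on $H=L^2(X,\mu)$; it is an isometry, and first I would pass to the natural extension of $(X,\mathcal X,\mu,T)$ in order to assume that $T$ is invertible, so that $U$ is unitary. Under the canonical embedding $L^2(X,\mu)\hookrightarrow L^2(\tilde X,\tilde\mu)$, $f\mapsto f\circ\pi$, the Koopman operators intertwine and the $T$-invariant functions correspond to the $\tilde T$-invariant ones, so it suffices to treat the unitary case. For $\lambda\in\mathbb{S}^1$ set $\Phi_N(\lambda):=\frac1N\sum_{n=1}^N\lambda^{a_n}$. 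Since each $\lambda^{a_n}$ has modulus one, $|\Phi_N(\lambda)|\le 1$ for all $N$ and $\lambda$; moreover $\Phi_N(1)=1$, while the hypothesis gives $\Phi_N(\lambda)\to 0$ for every $\lambda\neq 1$. Hence $\Phi_N\to\one_{\{1\}}$ pointwise on $\mathbb{S}^1$, under a uniform bound.

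Next I would apply the spectral theorem to the unitary operator $U$, obtaining a projection-valued measure $E$ on $\mathbb{S}^1$ with $U=\int_{\mathbb{S}^1}\lambda\,dE(\lambda)$. For $f\in H$ let $\mu_f(\cdot)=\langle E(\cdot)f,f\rangle$ be the associated spectral measure, a finite positive Borel measure on $\mathbb{S}^1$ of total mass $\lVert f\rVert_{L^2}^2$. By the bounded Borel functional calculus, $\frac1N\sum_{n=1}^N U^{a_n}=\Phi_N(U)$, and the orthogonal projection $P$ onto $I(T)=\{g:Ug=g\}$ equals $E(\{1\})=\one_{\{1\}}(U)$, because the eigenspace of the unitary $U$ for the eigenvalue $1$ is precisely the space of invariant functions. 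Consequently,
\[
\Bigl\lVert \tfrac1N\sum_{n=1}^N U^{a_n}f-Pf\Bigr\rVert_{L^2}^2
=\bigl\lVert(\Phi_N(U)-\one_{\{1\}}(U))f\bigr\rVert_{L^2}^2
=\int_{\mathbb{S}^1}\bigl|\Phi_N(\lambda)-\one_{\{1\}}(\lambda)\bigr|^2\,d\mu_f(\lambda).
\]

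Finally I would invoke dominated convergence: the integrand tends to $0$ pointwise on $\mathbb{S}^1$ (it vanishes at $\lambda=1$ and tends to $0$ elsewhere) and is bounded by $(|\Phi_N|+1)^2\le 4$, while $\mu_f$ is finite; hence the integral tends to $0$, giving the desired $L^2$ convergence. When $(X,\mathcal X,\mu,T)$ is ergodic, $I(T)$ consists of the constants and $Pf=\int f\,d\mu$. The hard part is not the scalar estimate — that is immediate from the hypothesis and dominated convergence — but the two structural identifications: reducing the isometry $U_T$ to a unitary operator (i.e.\ handling non-invertible $T$) and verifying that the spectral projection $E(\{1\})$ coincides with the orthogonal projection $P$ onto $I(T)$. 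Both are standard, yet they are where the care is needed; in particular the natural-extension reduction rests on the fact that invariant functions are preserved under the extension.
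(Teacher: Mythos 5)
Your proof is correct and takes essentially the same route as the paper: the paper deduces this proposition (and its generalization, \Cref{lemma:general_l_2_convergence}) from the Bochner--Herglotz spectral theorem, which is exactly the reduction you perform---diagonalize the Koopman operator, identify $P$ with the spectral projection at $\lambda=1$, and apply dominated convergence against the finite spectral measure using the scalar hypothesis. The only cosmetic difference is that you pass to the natural extension to obtain a unitary operator and a projection-valued measure, whereas the Herglotz formulation applies directly to the Koopman isometry via the positive-definite sequence $n\mapsto\langle U^nf,f\rangle$; both are standard formulations of the same spectral argument.
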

Within the framework of averaging schemes described in \Cref{subsection:strong_sweeping}, we can further extend \Cref{proposition:folklore_averages} as follows.
\begin{lemma}
\label{lemma:general_l_2_convergence}
Let $((\Phi_N)_{N\in \N},\tau)$ be an averaging scheme. If for every $\lambda\in \mathbb{S}^{1}\setminus\{1\}$ 
\[ \lim_{N\to \infty}\frac{1}{\#\Phi_N} \sum_{n \in \Phi_N} \lambda ^{\tau(n)}=0.\]
Then, for every m.p.s. $(X,\mathcal{X},\mu,T)$ and every $f\in L^{2}(X,\mu)$, we have
\[\lim_{N\to\infty}
\left\|\frac{1}{\#\Phi_N}\sum_{n \in \Phi_N} f\bigl(T^{\tau(n)}x\bigr) -P(f)\right\|_{L^2(X,\mu)}= 0.\]\end{lemma}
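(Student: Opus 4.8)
The plan is to prove Lemma~\ref{lemma:general_l_2_convergence} by a standard spectral/orthogonal-decomposition argument, reducing the general measure-preserving system to the model case of rotations via the spectral theorem, exactly as in the classical Proposition~\ref{proposition:folklore_averages}. The key observation is that the hypothesis is a statement purely about the ``Weyl sums'' $\frac{1}{|\Phi_N|}\sum_{n\in\Phi_N}\lambda^{\tau(n)}$, and these sums are precisely what appear when one tests the averaging operators against the spectral measure of $f$.

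\begin{proof}
Fix $f\in L^2(X,\mu)$ and write the orthogonal decomposition $f = P(f) + g$, where $P(f)\in I(T)$ is the projection onto the $T$-invariant functions and $g = f - P(f)$ lies in the orthogonal complement $I(T)^{\perp}$. Since $P(f)$ is $T$-invariant, $\frac{1}{|\Phi_N|}\sum_{n\in\Phi_N}P(f)(T^{\tau(n)}x) = P(f)(x)$ for every $N$, so it suffices to show
\[
\lim_{N\to\infty}\left\|\frac{1}{|\Phi_N|}\sum_{n\in\Phi_N}g(T^{\tau(n)}x)\right\|_{L^2(X,\mu)}=0
\]
for $g\in I(T)^{\perp}$. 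Let $U\colon L^2(X,\mu)\to L^2(X,\mu)$ be the Koopman operator $Ug = g\circ T$, which is a unitary operator when $T$ is invertible (and in general an isometry; the argument below works with the spectral measure on $\mathbb{S}^1$). By the spectral theorem, there is a finite positive Borel measure $\sigma_g$ on $\mathbb{S}^1$, the \emph{spectral measure} of $g$, such that for any polynomial expression in $U$ one has $\langle p(U)g, g\rangle = \int_{\mathbb{S}^1} p(\lambda)\,d\sigma_g(\lambda)$. Because $g\in I(T)^{\perp}$, the spectral measure carries no mass at $\lambda = 1$, i.e.\ $\sigma_g(\{1\})=0$.

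Writing $A_N g := \frac{1}{|\Phi_N|}\sum_{n\in\Phi_N}U^{\tau(n)}g$, expand the squared norm:
\[
\|A_N g\|^2 = \frac{1}{|\Phi_N|^2}\sum_{m,n\in\Phi_N}\langle U^{\tau(m)}g, U^{\tau(n)}g\rangle
= \int_{\mathbb{S}^1}\left|\frac{1}{|\Phi_N|}\sum_{n\in\Phi_N}\lambda^{\tau(n)}\right|^2 d\sigma_g(\lambda).
\]
Denote the integrand by $\Psi_N(\lambda) := \bigl|\frac{1}{|\Phi_N|}\sum_{n\in\Phi_N}\lambda^{\tau(n)}\bigr|^2$. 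By hypothesis, $\Psi_N(\lambda)\to 0$ pointwise for every $\lambda\in\mathbb{S}^1\setminus\{1\}$, and since $\sigma_g(\{1\})=0$, this convergence holds $\sigma_g$-almost everywhere. Moreover $0\le\Psi_N(\lambda)\le 1$ for all $\lambda$ and all $N$, so the dominated convergence theorem (with dominating constant $1\in L^1(\sigma_g)$, as $\sigma_g$ is finite) yields
\[
\lim_{N\to\infty}\|A_N g\|^2 = \int_{\mathbb{S}^1}\lim_{N\to\infty}\Psi_N(\lambda)\,d\sigma_g(\lambda) = 0.
\]
This completes the proof.
\end{proof}

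\textbf{The main obstacle} is conceptual rather than computational: one must correctly identify that the hypothesis, phrased as a scalar limit over all $\lambda\in\mathbb{S}^1\setminus\{1\}$, translates exactly into $\sigma_g$-a.e.\ convergence of the integrand $\Psi_N$, and that the exceptional point $\lambda=1$ is harmless precisely because projecting onto $I(T)^{\perp}$ removes the atom of the spectral measure at $1$. The uniform bound $\Psi_N\le 1$ (immediate from the triangle inequality, since $|\lambda^{\tau(n)}|=1$) is what licenses the passage of the limit inside the integral; no quantitative rate on the Weyl sums is needed, only pointwise decay, which is exactly what makes the dominated convergence theorem the right tool here.
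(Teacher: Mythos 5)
Your proof is correct and is essentially the paper's own argument: the paper disposes of this lemma in one line by citing the Bochner--Herglotz spectral theorem, and your write-up is precisely the standard spectral-measure argument behind that citation (decompose $f = P(f)+g$, note $\sigma_g(\{1\})=0$, express $\|A_Ng\|^2$ as the integral of the squared Weyl sums, and apply dominated convergence). Your parenthetical care about $T$ being merely an isometry in the non-invertible case is a nice touch, since the Herglotz theorem for positive-definite sequences covers exactly that situation.
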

This follows by applying the Bochner--Herglotz spectral theorem (see, for example, \cite[Theorem C.9]{Einsiedler_Ward11}). Now, we have the tools to generalize \cite[Theorem 2.5]{Loyd_dynamical_omega:2023}, which corresponds to the special case $\K=\Q$ and $c=\Omega_{\Q}$.
\begin{theorem}\label{thm:omega_norm_convergence}
Let $\K$ be a number field and $(X, \mathcal{X}, \mu, T)$ a m.p.s. 
For any $f \in L^2(X,\mu)$ and for either $c(\mathfrak{a}) = \Omega_\K(\mathfrak{a})$ or $c(\mathfrak{a}) = \omega_\K(\mathfrak{a})$,
we have
\[\
\lim_{N \to \infty} \Bigg \lVert \frac{1}{\#\{ \mathfrak{a} \in G_\K : \mathcal{N}(\mathfrak{a}) \leq N \}} \sum_{\substack{\mathfrak{a} \in G_\K \\ \mathcal{N}(\mathfrak{a}) \leq N}} f\left(T^{c(\mathfrak{a})} x\right) - P(f) \Bigg \rVert_{L^2(X,\mu)} = 0.
\]
\end{theorem}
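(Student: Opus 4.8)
The plan is to deduce the theorem from Lemma \ref{lemma:general_l_2_convergence}, applied to the averaging scheme $\Phi_N = \{\mathfrak{a}\in G_\K : \mathcal{N}(\mathfrak{a})\le N\}$ with $\tau = c$. By that lemma, the asserted $L^2$-convergence to $P(f)$ holds as soon as we verify the spectral hypothesis: for every $\lambda\in\mathbb{S}^1\setminus\{1\}$,
\[
\lim_{N\to\infty}\frac{1}{|\{\mathfrak{a}\in G_\K:\mathcal{N}(\mathfrak{a})\le N\}|}\sum_{\substack{\mathfrak{a}\in G_\K\\ \mathcal{N}(\mathfrak{a})\le N}}\lambda^{c(\mathfrak{a})}=0.
\]
Writing $\lambda=e^{2\pi i\theta}$ with $\theta\in(0,1)$ and recalling the weights $w_N(k)=N_k(N)/|\{\mathfrak{a}:\mathcal{N}(\mathfrak{a})\le N\}|$ from the proof of \Cref{thm:teo_arimetico_fields}, the average on the left equals $\sum_k w_N(k)\lambda^k$. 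Thus the whole problem reduces to exhibiting cancellation in this weighted exponential sum. (The case $c=\omega_\K$ is handled identically, with $\pi_k(N)$ in place of $N_k(N)$, since \Cref{lemma:approximation_level_sets} gives the same asymptotics.)

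I would first localize the sum to the interval $I_{N,C}$, reusing verbatim the reduction already performed in the proof of \Cref{thm:teo_arimetico_fields}. By the Hardy–Ramanujan theorem (\Cref{thm:hardy_ramanujan}), for $C$ large the mass $\sum_{k\notin I_{N,C}}w_N(k)$ is at most $\varepsilon$, while on $I_{N,C}$ \Cref{lemma:approximation_level_sets} replaces $w_N(k)$ by the Gaussian weights
\[
a_k:=\frac{1}{\sqrt{2\pi\log_2(N)}}\,e^{-\frac12\left(\frac{k-\log_2(N)}{\sqrt{\log_2(N)}}\right)^2}
\]
up to a multiplicative error that is uniformly $o(1)_{N\to\infty}$; the contribution of these error terms is absorbed exactly as before. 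Hence it suffices to bound $\bigl|\sum_{k\in I_{N,C}}a_k\lambda^k\bigr|$.

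The decisive step is a cancellation estimate for this Gaussian exponential sum, and this is the part I expect to be the real content (the rest being routine bookkeeping). Since $\lambda\neq 1$, the partial geometric sums $\Lambda_k:=\sum_{j\le k}\lambda^j$ are uniformly bounded by $1/|\sin(\pi\theta)|$. Summation by parts then yields
\[
\Bigl|\sum_{k\in I_{N,C}}a_k\lambda^k\Bigr|\le \frac{1}{|\sin(\pi\theta)|}\sum_{k}|a_{k+1}-a_k|+(\text{boundary terms}).
\]
The sequence $(a_k)$ is unimodal with total mass $O(1)$ and peak value $O\bigl(1/\sqrt{\log_2(N)}\bigr)$, so its total variation is $O\bigl(1/\sqrt{\log_2(N)}\bigr)$, which tends to $0$; the boundary terms vanish for the same reason. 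This is exactly where the hypothesis $\lambda\neq 1$ is used — for $\lambda=1$ the sum equals $1$, reflecting the projection $P(f)$ — and it mirrors the total-variation argument of \Cref{thm:teo_arimetico_fields} and Loyd's \cite[Theorem 2.5]{Loyd_dynamical_omega:2023}. Collecting the estimates gives $\sum_k w_N(k)\lambda^k=O(\varepsilon)+o(1)_{N\to\infty}$; letting $\varepsilon\to 0$ establishes the spectral hypothesis, and \Cref{lemma:general_l_2_convergence} completes the proof.
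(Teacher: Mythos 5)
Your proof is correct, and its outer skeleton coincides with the paper's: both reduce the theorem to the spectral hypothesis of \Cref{lemma:general_l_2_convergence} for the averaging scheme $\Phi_N=\{\mathfrak{a}\in G_\K:\mathcal{N}(\mathfrak{a})\le N\}$, $\tau=c$. Where you diverge is in how that hypothesis --- the vanishing of $\sum_k w_N(k)\lambda^k$ for $\lambda\neq 1$ --- is verified. The paper gets it in one line by applying \Cref{thm:dynamical_omega_number_fields} to uniquely ergodic rotations (irrational circle rotations when $\lambda$ has infinite order, rotations on $m$ points when $\lambda$ is a root of unity), since $\lambda^{c(\mathfrak{a})}$ is precisely an ergodic average of a continuous function along such a rotation and the relevant integral vanishes. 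You instead prove the cancellation from scratch: Hardy--Ramanujan localization to $I_{N,C}$, the Gaussian asymptotics of \Cref{lemma:approximation_level_sets}, and Abel summation against the total variation of the unimodal Gaussian weights, which is $O(1/\sqrt{\log_2 N})$, with boundary terms of the same size. All of these steps are sound: the partial geometric sums are indeed bounded by $1/|\sin(\pi\theta)|$, the multiplicative errors are absorbed exactly as in \Cref{thm:teo_arimetico_fields}, and letting $\varepsilon\to 0$ closes the argument. What your route buys is independence from the topological-dynamics layer (unique ergodicity, empirical measures); what it costs is duplication: your total-variation estimate is essentially the content of \eqref{equation:serie_weighted} in the proof of \Cref{thm:teo_arimetico_fields}, so you could shorten your argument by citing that display and going straight to summation by parts --- or, even more cheaply, apply \Cref{thm:teo_arimetico_fields} itself with $a(k)=\lambda^k$, which gives $(\lambda-1)\sum_k w_N(k)\lambda^k=o(1)_{N\to\infty}$ and hence the spectral hypothesis immediately, with no exponential-sum estimate needed at all.
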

\begin{proof}
Using the fact that irrational rotations and rotations on $m$-points are uniquely ergodic, it follows from \Cref{thm:dynamical_omega_number_fields} that for $c = \omega_{\K}$ or $c = \Omega_\K$, and for any $\lambda\in \S^1 \setminus\{1\}$
\[\lim_{N\to \infty}\frac{1}{\#\{\mathfrak{a}\in G_\K: \mathcal{N}(\mathfrak{a})\leq N\}}\sum_{\substack{\mathfrak{a} \in G_\K \\ \mathcal{N}(\mathfrak{a}) \le N}}\lambda^{c(\a)}=0.\]
Then, by considering the averaging scheme given by $((\Phi_N)_{N\in \N},\tau)$, where $\Phi_N=\{ \mathfrak{a}\in G_\K: \mathcal{N}(\mathfrak{a})\leq N\}$, and $\tau=\omega_\K$, or $\tau=\Omega_\K$, we can conclude by applying \Cref{lemma:general_l_2_convergence}.
\end{proof}
By the same reasoning, we obtain the norm convergence of the averages as in \Cref{thm:ThmD}.
\begin{corollary}
Let $(X,\mathcal{X},\mu,T)$ be a m.p.s. and $f\in L^2(X,\mu)$. Then 
\[\lim_{N\to \infty} \Bigg \lVert \frac{1}{N^2}\sum_{1\leq m, n \leq N} f(T^{\Omega(m^2+n^2)}x)-P(f)\Bigg\rVert_{L^2(X,\mu)}=0.\]
\end{corollary}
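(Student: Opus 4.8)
The plan is to fit these averages into the averaging-scheme framework of \Cref{lemma:general_l_2_convergence} and to argue exactly as in the proof of \Cref{thm:omega_norm_convergence}. First I would introduce the averaging scheme $((\Phi_N)_{N\in\N},\tau)$ given by
\[
\Phi_N=\{(m,n)\in\N^2 : 1\le m,n\le N\},\qquad \tau(m,n)=\Omega(m^2+n^2),
\]
so that $|\Phi_N|=N^2$ and $\tau$ takes values in $\N\cup\{0\}$ (indeed $m^2+n^2\ge 2$ for $m,n\ge 1$, so $\tau\ge 1$). With this choice the averages $\frac{1}{|\Phi_N|}\sum_{(m,n)\in\Phi_N} f(T^{\tau(m,n)}x)$ coincide with the averages in the statement, and hence by \Cref{lemma:general_l_2_convergence} it suffices to verify the spectral hypothesis: for every $\lambda\in\S^1\setminus\{1\}$,
\[
\lim_{N\to\infty}\frac{1}{N^2}\sum_{1\le m,n\le N}\lambda^{\Omega(m^2+n^2)}=0.
\]

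To establish this, I would feed suitable uniquely ergodic rotations into \Cref{thm:ThmD}, mirroring how the corresponding exponential sums are produced in the proof of \Cref{thm:omega_norm_convergence}. If $\lambda$ is not a root of unity, take $X=\S^1$ with the irrational rotation $Tz=\lambda z$ and the continuous function $f(z)=z$; then \Cref{thm:ThmD} yields $\frac{1}{N^2}\sum_{1\le m,n\le N}\lambda^{\Omega(m^2+n^2)}z\to\int_{\S^1} z\,d\mu=0$, and dividing by $z\ne 0$ gives the claim. If instead $\lambda=e^{2\pi i p/q}$ is a primitive $q$-th root of unity with $q>1$, take the uniquely ergodic rotation on $q$ points, $X=\Z/q\Z$ with $Tx=x+1$, and $f(x)=\lambda^{x}$; then $f(T^{k}x)=\lambda^{x}\lambda^{k}$, and \Cref{thm:ThmD} gives convergence to $\int f\,d\mu=\frac{1}{q}\sum_{x=0}^{q-1}\lambda^{x}=0$. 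In either case the exponential sum tends to zero, so the hypothesis of \Cref{lemma:general_l_2_convergence} holds.

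The argument is largely routine once the averaging-scheme reduction is in place; the only point requiring care is that \Cref{thm:ThmD} is stated for continuous functions on uniquely ergodic systems, so the exponential sums cannot be invoked directly but must be extracted through the characters $f$ chosen above. In particular, roots of unity must be handled separately via the finite cyclic rotation rather than the circle rotation, though both cases fall within the scope of \Cref{thm:ThmD}. With the spectral hypothesis verified, \Cref{lemma:general_l_2_convergence} applies and delivers the stated $L^2$-convergence to $P(f)$.
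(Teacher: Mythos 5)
Your proposal is correct and follows essentially the same route as the paper: the paper's proof is exactly ``the same reasoning'' as \cref{thm:omega_norm_convergence}, i.e., deduce the exponential-sum condition $\frac{1}{N^2}\sum_{1\le m,n\le N}\lambda^{\Omega(m^2+n^2)}\to 0$ for $\lambda\in\S^1\setminus\{1\}$ by applying \cref{thm:ThmD} to irrational rotations and to rotations on finitely many points, and then invoke \cref{lemma:general_l_2_convergence} with the averaging scheme $\Phi_N=\{1,\ldots,N\}^2$, $\tau(m,n)=\Omega(m^2+n^2)$. Your separate treatment of roots of unity versus non-roots of unity, and the extraction of the exponential sums via characters, matches the paper's argument.
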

\subsection{Applications to number theory}
Since we have ergodic theorems for uniquely ergodic systems, we can derive equidistribution results for specific sequences as corollaries. Here, we illustrate some applications.

First, we present a generalization of the Pillai--Selberg theorem \cite{Pillai_Mangoldt,Selberg_squarefree_1939} to number fields. This result has previously been established for $\Omega_\K$ by Burgin in \cite{Burgin_new_elementary_landau:2025}. The proof proceeds by analyzing exponential sums in $\mathbb{Z}_m$, and then interpreting them as ergodic averages in rotations on $m$-points. The desired result then follows directly from our understanding of these averages along $\omega_{\K}$ and $\Omega_\K$, as established in \Cref{thm:dynamical_omega_number_fields}.
\begin{corollary}
Let $\K$ be a number field. For every integer $m\geq 2$ and every $\ell \in \{0, \ldots, m-1\}$, we have
\begin{align*}
\lim_{N\to\infty}
\frac{\#\{\mathfrak a\in G_{\mathbb K} :
\mathcal{N}(\mathfrak a)\le N,\ 
\Omega_{\mathbb K}(\mathfrak a)\equiv \ell \pmod m\}}
{\#\{\mathfrak a\in G_{\mathbb K} :
\ \mathcal{N}(\mathfrak{a})\le N\}}
&= \frac{1}{m}, \\
\lim_{N\to\infty}
\frac{\#\{\mathfrak a\in G_{\mathbb K} :
 \mathcal{N}(\mathfrak a)\le N,\ 
\omega_{\mathbb K}(\mathfrak a)\equiv \ell \pmod m\}}
{\#\{\mathfrak a\in G_{\mathbb K} :
\mathcal{N}(\mathfrak a)\le N\}}
&= \frac{1}{m}.
\end{align*}
\end{corollary}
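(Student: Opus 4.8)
The plan is to reduce the congruence-counting problem to the vanishing of exponential sums along $c$, via orthogonality of the characters of $\mathbb{Z}/m\mathbb{Z}$, and then to supply that vanishing from \Cref{thm:dynamical_omega_number_fields} applied to an $m$-point rotation.

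First I would expand the indicator of the congruence condition using the finite Fourier identity $\one[c(\mathfrak{a})\equiv \ell \bmod m]=\frac{1}{m}\sum_{j=0}^{m-1}\lambda_j^{c(\mathfrak{a})-\ell}$, where $\lambda_j=e^{2\pi i j/m}$. Summing over $\mathfrak{a}\in G_\K$ with $\mathcal{N}(\mathfrak{a})\le N$ and dividing by the total count, the ratio in the statement becomes $\frac{1}{m}\sum_{j=0}^{m-1}\lambda_j^{-\ell}\,A_N(\lambda_j)$, where I write $A_N(\lambda):=\frac{1}{|\{\mathfrak{a}\in G_\K:\mathcal{N}(\mathfrak{a})\le N\}|}\sum_{\mathcal{N}(\mathfrak{a})\le N}\lambda^{c(\mathfrak{a})}$. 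The term $j=0$ contributes exactly $\frac{1}{m}$, since $A_N(1)=1$, so it remains only to prove that $A_N(\lambda_j)\to 0$ for each nontrivial root of unity $\lambda_j$, $j\in\{1,\dots,m-1\}$.

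To obtain this vanishing I would apply \Cref{thm:dynamical_omega_number_fields} to the uniquely ergodic $m$-point rotation $X=\mathbb{Z}/m\mathbb{Z}$, $Tx=x+1$, whose unique invariant measure $\mu$ is the uniform measure. Taking the continuous test function $f=\chi_j$ with $\chi_j(x)=e^{2\pi i jx/m}$, one has $f(T^{c(\mathfrak{a})}x)=\chi_j(x)\,\lambda_j^{c(\mathfrak{a})}$ and $\int_X \chi_j\,d\mu=0$ for $j\ne 0$. Hence \Cref{thm:dynamical_omega_number_fields} yields $\chi_j(x)\,A_N(\lambda_j)\to 0$, and choosing $x=0$ (so that $\chi_j(0)=1\ne 0$) gives $A_N(\lambda_j)\to 0$. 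This is exactly the exponential-sum estimate already recorded in the proof of \Cref{thm:omega_norm_convergence}. Collecting the contributions shows the ratio tends to $\frac{1}{m}$, and the argument is identical for $c=\Omega_\K$ and $c=\omega_\K$.

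The main point to get right is organizational rather than analytic: all the number-theoretic content is carried by \Cref{thm:dynamical_omega_number_fields}, and the remaining work is the standard finite-Fourier decomposition. The only care needed is to realize the exponential sum as an ergodic average over a uniquely ergodic finite system, so that the dynamical theorem applies to a genuinely continuous function, and to note that the conclusion holds for the single point $x=0$ that we use.
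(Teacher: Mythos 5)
Your proposal is correct and follows essentially the same route as the paper: the paper's proof is exactly the standard finite-Fourier (orthogonality) decomposition of the congruence indicator, with the resulting exponential sums $\sum_{\mathcal{N}(\mathfrak{a})\le N}\lambda^{c(\mathfrak{a})}$ interpreted as ergodic averages on the uniquely ergodic $m$-point rotation and shown to vanish for $\lambda\ne 1$ via \Cref{thm:dynamical_omega_number_fields}. Your execution, including the evaluation at the point $x=0$ and the observation that this vanishing is the same estimate recorded in the proof of \Cref{thm:omega_norm_convergence}, matches the paper's intended argument.
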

Using the same ideas, we present a generalization of a result of Erd\H{o}s-Delange \cite{Delange1958,Erdos1946}, which was also previously obtained by Burgin in \cite{Burgin_new_elementary_landau:2025} in the case of $\Omega_{\K}$. The proof follows from applying \Cref{thm:dynamical_omega_number_fields} to irrational rotations and utilizing Weyl’s equidistribution criterion. Recall that $\{x\} = x - \lfloor x \rfloor$ denotes the fractional part of a real number.
\begin{corollary}\label{cor:weyl_omega}
Let $\K$ be a number field. For any irrational number $\alpha$ and any interval $I \subseteq [0,1)$, we have 
\[\lim_{N\to \infty} \frac{1}{\#\{\a\in G_{\K}:\mathcal{N}(\a)\leq N \}} \sum_{\substack{\mathfrak{a} \in G_\K \\ \mathcal{N}(\mathfrak{a}) \le N}} \one_{I}(\{\alpha \Omega_{\K}(\a) \})=|I|,\]
and
\[\lim_{N\to \infty} \frac{1}{\#\{\a\in G_{\K}:\mathcal{N}(\a)\leq N \}} \sum_{\substack{\mathfrak{a} \in G_\K \\ \mathcal{N}(\mathfrak{a}) \le N}} \one_I(\{\alpha \omega_{\K}(\a) \})=|I|.\]
\end{corollary}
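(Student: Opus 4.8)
The plan is to derive \Cref{cor:weyl_omega} as a direct application of \Cref{thm:dynamical_omega_number_fields} together with Weyl's equidistribution criterion. The key observation is that the sequence $(\{\alpha\, c(\a)\})_{\a}$, where $c=\Omega_\K$ or $\omega_\K$, can be realized dynamically through the irrational rotation on the circle. Concretely, I would take $(X,T)$ to be the one-dimensional torus $X=\mathbb{T}=\R/\Z$ with $Tx = x+\alpha \pmod 1$. Since $\alpha$ is irrational, this rotation is uniquely ergodic with unique invariant measure equal to the Lebesgue measure $\mu = \mathrm{Leb}$. Starting from the point $x=0$, we have $T^{c(\a)}0 = \{\alpha\, c(\a)\}$, so the ergodic averages in \Cref{thm:dynamical_omega_number_fields} become exactly the averages $\frac{1}{|\{\a\in G_\K:\mathcal N(\a)\le N\}|}\sum_{\mathcal N(\a)\le N} f(\{\alpha\, c(\a)\})$ for $f\in C(\mathbb T)$.

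The main structural step is to pass from continuous test functions $f$ to the indicator $\one_I$ of an interval. First I would verify the statement for every continuous $f\in C(\mathbb T)$: applying \Cref{thm:dynamical_omega_number_fields} to this rotation and the point $x=0$ gives
\[
\lim_{N\to\infty}\frac{1}{|\{\a\in G_\K:\mathcal N(\a)\le N\}|}\sum_{\substack{\a\in G_\K\\ \mathcal N(\a)\le N}} f(\{\alpha\, c(\a)\}) = \int_{\mathbb T} f\, d\mathrm{Leb}.
\]
Equivalently, one can run Weyl's criterion directly: it suffices to check the vanishing of the exponential sums for each nonzero frequency $h\in\Z\setminus\{0\}$, that is, to show that the averages of $e^{2\pi i h\alpha\, c(\a)} = \lambda^{c(\a)}$ tend to zero, where $\lambda = e^{2\pi i h\alpha}\in\S^1\setminus\{1\}$ because $h\alpha\notin\Z$. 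This is precisely the content of \eqref{eq:vanishing_omega_number_fields} applied with the rotation $\lambda$ (the $2$-point rotation argument generalizes verbatim to rotation by any $\lambda\in\S^1\setminus\{1\}$), so the exponential sums vanish and Weyl's criterion yields equidistribution of $(\{\alpha\, c(\a)\})_\a$ in $[0,1)$.

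Finally, the equidistribution for all intervals follows from the standard sandwiching argument: given $\varepsilon>0$ and an interval $I$, choose continuous functions $f^-\le \one_I\le f^+$ on $\mathbb T$ with $\int (f^+-f^-)\,d\mathrm{Leb}<\varepsilon$, apply the continuous case to both $f^-$ and $f^+$, and take $\varepsilon\to 0$. This replaces the integral $\int \one_I\,d\mathrm{Leb}=|I|$ as the common limit. The only subtlety to handle with care is the boundary of $I$: since $\mathrm{Leb}$ is non-atomic the endpoints contribute nothing, so the approximation from inside and outside pinch the averages to $|I|$. I do not expect any genuine obstacle here; the entire argument is a routine transfer of an ergodic theorem for uniquely ergodic systems into an equidistribution statement, and the only real input—the vanishing of the exponential sums—is supplied directly by \Cref{thm:dynamical_omega_number_fields} and \eqref{eq:vanishing_omega_number_fields}. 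The same proof applies identically to both $c=\Omega_\K$ and $c=\omega_\K$.
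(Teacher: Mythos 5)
Your proposal is correct and follows essentially the same route as the paper, which proves \Cref{cor:weyl_omega} by applying \Cref{thm:dynamical_omega_number_fields} to irrational rotations (equivalently, to the characters $\lambda^{c(\a)}$ with $\lambda\in\S^1\setminus\{1\}$, as in the proof of \Cref{thm:omega_norm_convergence}) and invoking Weyl's equidistribution criterion. Your additional details---the choice of base point $x=0$, the unique ergodicity of the rotation, and the sandwiching of $\one_I$ by continuous functions---are exactly the standard steps the paper leaves implicit.
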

More generally, the same conclusion of \Cref{cor:weyl_omega} holds, if we replace the polynomial sequence $p(n)=n \alpha$ (where $\alpha$ is irrational) by any generalized polynomial $g(n)$ (see \cite{Bergelson_Leibman07} for the definitions) such that $(g(n))_{n\in \N}$ is uniformly distributed modulo $1$.
Recall that a sequence $(x_n)_{n\in\mathbb N}$ is said to be
\emph{uniformly distributed modulo $1$} if for every interval
$I\subset [0,1)$, $\lim_{N\to\infty}\frac{1}{N}\#\{1\le n\le N : \{x_n\}\in I\}= |I|$. To establish this extension, we require the following additional input.
\begin{theorem}\cite[Theorem A]{Bergelson_Leibman07}
Let $g:\mathbb{N}\cup \{0\}\to \mathbb{R}$ be a generalized polynomial. Then there exists a nilmanifold $(X,T)$, a Riemann-integrable function $f$ and $x\in X$ such that \label{thm:bergelson_leibman}
\[
\{g(n)\} = f(T^n x), \quad n \in \N\cup\{0\}.
\] 
\end{theorem}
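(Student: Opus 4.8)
The plan is to prove this representation theorem by induction on the syntactic complexity of the generalized polynomial $g$. Recall that the class of generalized polynomials is generated from ordinary real polynomials by the ring operations (pointwise addition and multiplication) together with the integer-part operation $p\mapsto\lfloor p\rfloor$. I would call a sequence $h\colon\N\cup\{0\}\to\R$ \emph{nil-representable} if there are a nilmanifold $X=G/\Gamma$, a translation $T$ on $X$, a base point $x\in X$, and a Riemann-integrable $f\colon X\to\R$ with $h(n)=f(T^nx)$ for all $n$. The goal becomes: the fractional part $\{g\}$ of every generalized polynomial $g$ is nil-representable. To make the induction close, I would actually prove the stronger statement that any finite tuple of bounded generalized polynomials is \emph{simultaneously} nil-representable on one common nilmanifold; this strengthening is forced on us, because the bracket operation applied to one coordinate needs to interact with the others.

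First I would treat the base case and the ring operations. For an ordinary real polynomial $p$, the sequence $\{p(n)\}$ is realized on a torus viewed as a low-step nilmanifold: for instance $\{n^2\alpha\}$ arises from the skew-product transformation $T(u,v)=(u+\alpha,v+u)$ on $\mathbb{T}^2$ evaluated at a suitable base point, and analogous step-by-step skew products handle higher degrees. Closure under the ring operations is then formal: given representations $h_i(n)=f_i(T_i^nx_i)$ on nilmanifolds $X_i$, I pass to the product nilmanifold $X_1\times X_2$ with transformation $T_1\times T_2$, on which both sequences are represented by the pullbacks of $f_1,f_2$; sums and products are then represented by $f_1+f_2$ and $f_1f_2$, which remain Riemann-integrable.

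The substantive step is closure under the integer-part operation, and I expect this to be the main obstacle. Two difficulties appear at once. The first is \emph{unboundedness}: $\lfloor g\rfloor$ is generally not bounded, so it cannot be written as $f(T^nx)$ for bounded $f$; the fix is to retain only the bounded datum $\{g\}$ while recording the ``carry'' $\lfloor g\rfloor$ through an additional coordinate of an \emph{extended} nilsystem, so that a later product such as $\lfloor g_1\rfloor\cdot g_2$ again becomes a polynomial orbit. This is precisely the mechanism by which the nilpotency step of $X$ grows with the depth of bracket-nesting, and keeping the tuple of all intermediate generalized polynomials jointly representable is what permits this extension to be built. The second, more delicate difficulty is \emph{Riemann integrability} of the resulting $f$: the map $t\mapsto\{t\}$ is discontinuous on $\Z$, so $f$ inherits discontinuities along the preimage of that set.

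To resolve the Riemann-integrability issue I would invoke Leibman's equidistribution theorem for polynomial orbits on nilmanifolds: the orbit $(T^nx)_n$ equidistributes on a closed subnilmanifold $Y\subseteq X$, the discontinuity locus of $f$ meets $Y$ in a finite union of translates of proper subnilmanifolds, hence in a Haar-null set, so $f$ is Riemann-integrable on the orbit closure, and the pointwise identity $\{g(n)\}=f(T^nx)$ holds after arranging that the orbit avoids the boundary (or that $f$ is consistently defined there). Assembling the base case, the ring-closure step, and this bracket-closure step according to the induction on complexity produces a single nilmanifold $(X,T)$, a point $x$, and a Riemann-integrable $f$ with $\{g(n)\}=f(T^nx)$, as claimed. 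The crux throughout is the interplay between the algebraic bookkeeping of carries, which dictates the nilmanifold one must build, and the equidistribution input, which guarantees that the representing function is genuinely Riemann-integrable.
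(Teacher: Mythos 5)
First, a point of orientation: the paper does not prove this statement at all --- it is imported verbatim as \cite[Theorem A]{Bergelson_Leibman07} and used as an external input, so the only meaningful benchmark is Bergelson and Leibman's original argument. Your outline does capture the broad architecture of that argument (induction on bracket complexity, simultaneous representation of tuples rather than single sequences, product systems for the ring operations, nilpotency step growing with bracket depth), but it has a genuine gap at precisely the step you yourself flag as substantive: closure under the integer-part operation. Saying that one ``records the carry $\lfloor g\rfloor$ through an additional coordinate of an extended nilsystem, so that a later product such as $\lfloor g_1\rfloor\cdot g_2$ again becomes a polynomial orbit'' is a statement of what must be proved, not a proof. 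The entire content of the Bergelson--Leibman theorem is the construction of that extension: one must exhibit a nilpotent Lie group, a lattice, a translation, and a base point whose orbit, read in Mal'cev coordinates, reproduces the given bracket expression. In their paper this is achieved by realizing bracket expressions inside (quotients of) nilpotent Lie groups of sufficiently high step, where the group multiplication itself produces the carries; nothing in your sketch explains why such a realization exists or how the unbounded integer-valued datum $\lfloor g\rfloor$ is absorbed into a group translation, and without this the induction does not close.

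A secondary problem is your treatment of Riemann integrability. By Lebesgue's criterion this is a statement about the Haar measure of the discontinuity set of $f$ inside $X$, and what Bergelson--Leibman actually verify is that the representing function can be taken piecewise polynomial, with pieces whose boundaries are null sets in Mal'cev coordinates. Leibman's equidistribution theorem for polynomial orbits is not what delivers this; it is the tool one uses \emph{after} Theorem A, to convert the representation $\{g(n)\} = f(T^n x)$ into distributional information about the values of $g$. Moreover, the pointwise identity must hold at \emph{every} $n$, including those $n$ for which the orbit point lies on a discontinuity of $f$; your parenthetical ``arranging that the orbit avoids the boundary (or that $f$ is consistently defined there)'' again asserts the needed fact rather than proving it.
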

By combining \Cref{thm:bergelson_leibman} with the fact that orbits in nilsystems are always uniquely ergodic (see, for example, \cite[Chapter 11]{Host_Kra_nilpotent_structures_ergodic_theory:2018}), and by applying Weyl’s equidistribution criterion, we may rephrase \cite[Corollary 1.6]{Bergelson_Richter_dynamical_pnt:2022} in terms of averaging schemes as follows.
\begin{theorem} \label{thm:equidistr_scheme}
Let $((\Phi_N)_{N\in\mathbb{N}},\tau)$ be an averaging scheme satisfying the
following condition: for every uniquely ergodic system $(X,T)$ with invariant measure
$\mu$, every $f\in C(X)$, and every $x\in X$,
\[   \lim_{N\to\infty}\frac{1}{\#\Phi_N}\sum_{n\in\Phi_N}
    f(T^{\tau(n)}x) = \int f\,d\mu.
\]
Let $g:\N\cup\{0\}\to \R$ be a generalized polynomial. Then, the following are equivalent:
\begin{enumerate}
    \item  $g(n)$ is uniformly distributed modulo $1$
    \item For every interval $I \subseteq [0,1)$, we have
    \[
\lim_{N\to\infty} \frac{1}{\#\Phi_N} \sum_{n \in \Phi_N} \one_I(\{g(\tau(n))\}) = |I|.
\]
\end{enumerate}
\end{theorem}
As a corollary of \Cref{thm:equidistr_scheme,thm:dynamical_omega_number_fields},  we derive the following result.
\begin{corollary}
Let $\K$ be a number field. For any polynomial $ g(n) : \mathbb{N}\cup \{0\} \to \mathbb{R} $, where $ g(n) $ has at least one irrational non-constant term, for any $I \subseteq [0, 1) $, we have 
\[\lim_{N\to \infty}\frac{1}{ \#\{\a\in G_\K:\mathcal{N}(\a)\leq N\}} \sum_{\substack{\mathfrak{a} \in G_\K \\ \mathcal{N}(\mathfrak{a}) \le N}}\one_{I}(\{ g(\omega_\K(\a)) \})=|I|.\]
\end{corollary}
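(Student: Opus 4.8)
The plan is to recognize that the ordinary polynomial $g$ is a particular instance of a generalized polynomial, so that the equidistribution characterization stated just above (for generalized polynomials and arbitrary averaging schemes) applies directly to the averaging scheme $((\Phi_N)_{N\in\N},\tau)$ given by $\Phi_N=\{\a\in G_\K:\mathcal{N}(\a)\le N\}$ and $\tau=\omega_\K$. Under this identification the quantity we must compute is exactly the left-hand side appearing in that theorem with $Q=g$, and the theorem asserts it equals $|I|$ precisely when $(g(n))_{n}$ is uniformly distributed modulo $1$. Thus the proof reduces entirely to checking this single equidistribution property for the polynomial $g$.

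To establish that $(g(n))_{n\in\N\cup\{0\}}$ is uniformly distributed modulo $1$, I would invoke Weyl's classical equidistribution theorem for polynomials: writing $g(n)=\sum_{j=0}^{d}c_j n^j$, the hypothesis that $g$ has an irrational non-constant term means that some coefficient $c_j$ with $j\ge 1$ is irrational. Weyl's theorem then guarantees that $(g(n))_{n}$ is uniformly distributed modulo $1$. Concretely this amounts to verifying Weyl's criterion $\frac{1}{N}\sum_{n=1}^{N}e^{2\pi i h\, g(n)}\to 0$ for every nonzero integer $h$, which holds because $h\,g(n)$ still has an irrational non-constant coefficient, and the standard Weyl differencing (van der Corput) argument applies.

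With the equidistribution of $(g(n))$ in hand, I would apply the preceding theorem to this averaging scheme and to an arbitrary interval $I\subseteq[0,1)$, taking $Q=g$, to obtain
\[
\lim_{N\to\infty}\frac{1}{|\{\a\in G_\K:\mathcal{N}(\a)\le N\}|}\sum_{\substack{\mathfrak{a}\in G_\K\\ \mathcal{N}(\mathfrak{a})\le N}}\one_I(\{g(\omega_\K(\a))\})=|I|,
\]
which is the claimed identity. I do not anticipate a genuine obstacle in this corollary: the substantive work is already carried by the equidistribution characterization established above (which itself rests on \Cref{thm:bergelson_leibman} and the unique ergodicity of nilsystem orbits), and the only remaining ingredients are the classical Weyl theorem and the trivial observation that an ordinary polynomial is a generalized polynomial. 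The statement for $\Omega_\K$ in place of $\omega_\K$, should one want it, follows by the identical argument with $\tau=\Omega_\K$.
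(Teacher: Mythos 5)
Your proposal is correct and matches the paper's intended derivation exactly: the paper states this as an immediate corollary of the preceding theorem on generalized polynomials and averaging schemes, applied to the scheme $\Phi_N=\{\a\in G_\K:\mathcal{N}(\a)\le N\}$, $\tau=\omega_\K$, with the equidistribution hypothesis supplied by Weyl's classical theorem for polynomials having an irrational non-constant coefficient. Your only addition is spelling out the Weyl-criterion verification, which the paper leaves implicit.
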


\end{document}